\documentclass[reqno,12pt,draft]{amsart}
\usepackage{amsmath,amsthm,amsfonts,amssymb}

\date{}
\setlength{\textwidth}{16truecm}
\setlength{\textheight}{25truecm}
\setlength{\oddsidemargin}{0pt}
\setlength{\oddsidemargin}{0cm}
\setlength{\evensidemargin}{0cm}
\setlength{\topmargin}{-25pt}

\newtheorem{theorem}{Theorem}[section]
\newtheorem{lemma}[theorem]{Lemma}
\newtheorem{corollary}[theorem]{Corollary}
\newtheorem{remark}[theorem]{Remark}

\newtheorem{example}[theorem]{Example}
\numberwithin{equation}{section}

\begin{document}

\centerline{\bf On estimates of solutions of Fokker--Planck--Kolmogorov equations with }
\centerline{\bf potential terms and non uniformly elliptic diffusion matrices}

\vspace*{0.2cm}

\centerline{\bf S.V. Shaposhnikov}

\vspace*{0.2cm}

\centerline{Department of Mechanics and Mathematics, Moscow State University,}
\centerline{119991 Moscow, Russia, starticle@mail.ru}

\vspace*{0.2cm}

\centerline{{\bf Abstract}}

\vspace*{0.2cm}

{\small
We consider Fokker--Planck--Kolmogorov equations with unbounded coefficients and obtain
upper estimates of solutions. We also obtain
new estimates involving Lyapunov functions.}

\footnotetext[1]{AMS Subject Classification: 35K10, 35K65, 60J35}

\section{\sc Introduction}

The goal of this work is to obtain upper estimates of solutions of the Fokker--Planck--Kolmogorov equation
\begin{equation}\label{e1}
\partial_t\mu=\partial_{x_i}\partial_{x_j}\bigl(a^{ij}\mu\bigr)-\partial_{x_i}\bigl(b^i\mu\bigr)+c\mu.
\end{equation}
Throughout the summation over repeated  indices is meant.
Let $T>0$. We shall say that a locally finite Borel measure $\mu$ on
$\mathbb{R}^d\times(0, T)$ is given by a flow of Borel measures~$(\mu_t)_{t\in (0, T)}$
if for every Borel set $B\subset \mathbb{R}^d$ the mapping $t\to \mu_t(B)$ is measurable
and for every function
$u\in C^{\infty}_0(\mathbb{R}^d\times(0, T))$ one has
$$
\int_{\mathbb{R}^d\times(0, T)}u(x, t)\,\mu(dx\,dt)=\int_0^T\int_{\mathbb{R}^d}u(x, t)\,\mu_t(dx)\,dt.
$$
A typical example is $\mu(B)=P(x_t\in B)\,dt$, where $x_t$ is a random process.
Set
$$
Lu=a^{ij}\partial_{x_i}\partial_{x_j}u+b^i\partial_{x_i}u+cu.
$$
We shall say that a measure $\mu=(\mu_t)_{t\in(0, T)}$ satisfies equation (\ref{e1})
if $a^{ij}$, $b^i$ and $c$ are locally integrable with respect to
the measure $|\mu|$ (the total variation of $\mu$) and
$$
\int_0^T\int_{\mathbb{R}^d}
\bigl[\partial_tu(x, t)+Lu(x, t)\bigr]\,\mu_t(dx)\,dt=0
$$
for every $u\in C^{\infty}_0(\mathbb{R}^d\times(0, T))$.
The measure $\mu$ satisfies the initial condition
$\mu|_{t=0}=\nu$, where $\nu$ is a Borel locally finite measure on $\mathbb{R}^d$, if
for every function $\zeta\in C^{\infty}_0(\mathbb{R}^d)$ there holds the equality
$$
\lim_{t\to 0}\int_{\mathbb{R}^d}\zeta(x)\,\mu_t(dx)=\int_{\mathbb{R}^d}\zeta(x)\,\nu(dx).
$$

The following assertion is trivial and can be found in \cite{BDR08}, \cite{BRSH11}.

\begin{lemma}\label{lem01}
Let $\mu=(\mu_t)_{t\in(0, T)}$ be a solution of equation {\rm (\ref{e1})},
let $u\in C^{1, 2}(\mathbb{R}^d\times(0, T))$ be such that
$u(t, x)=0$ if $x\not\in U$ for some ball $U\subset\mathbb{R}^d$. Then
there exists a set $J_u\subset(0, T)$ of full Lebesgue measure in $(0, T)$ such that
for all $s, t\in J_u$
$$
\int_{\mathbb{R}^d}u(x, t)\,\mu_t(dx)=\int_{\mathbb{R}^d}u(x, s)\,\mu_s(dx)+
\int_s^t\int_{\mathbb{R}^d}
\bigl[\partial_{\tau}u(x, \tau)+Lu(x, \tau)\bigr]\,\mu_{\tau}(dx)\,d\tau.
$$
Moreover, if, in addition, $u\in C(\mathbb{R}^d\times[0, T))$,
the measure $\mu=(\mu_t)_{0<t<T}$ satisfies the initial condition
$\mu|_{t=0}=\nu$ and $a^{ij}$, $b^i, c\in L^1(U\times[0, T], \mu)$,
then  we may assume that for every $t\in J_u$
$$
\int_{\mathbb{R}^d}u(x, t)\,\mu_t(dx)=\int_{\mathbb{R}^d}u(x, 0)\,\nu(dx)+
\int_0^t\int_{\mathbb{R}^d}
\bigl[\partial_{\tau}u(x, \tau)+Lu(x, \tau)\bigr]\,\mu_{\tau}(dx)\,d\tau.
$$
\end{lemma}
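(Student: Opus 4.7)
The plan is to recognize the identity as the statement that the scalar function $F(\tau) := \int_{\mathbb{R}^d} u(x,\tau)\,\mu_\tau(dx)$ has distributional derivative $G(\tau) := \int_{\mathbb{R}^d}\bigl[\partial_\tau u + Lu\bigr](x,\tau)\,\mu_\tau(dx)$ on $(0,T)$. The conclusion of the first part then follows at once from the fundamental theorem of calculus for absolutely continuous functions, upon taking $J_u$ to be the full-measure set on which $F$ agrees with its absolutely continuous representative.

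The preliminary step is to upgrade the definition of solution from $C_0^\infty$ test functions to $C^{1,2}$ test functions with compact support in $\mathbb{R}^d\times(0,T)$; this is routine mollification in $(x,\tau)$, justified by the hypothesis that $a^{ij},b^i,c$ are locally integrable with respect to $|\mu|$ (dominated convergence). Given this extension, for any $\varphi\in C_0^\infty((0,T))$ I would plug the test function $v(x,\tau):=u(x,\tau)\varphi(\tau)$, which has compact support in $U\times\operatorname{supp}\varphi$, into the equation; expanding $\partial_\tau v+Lv$ and applying Fubini yields
$$
\int_0^T F(\tau)\varphi'(\tau)\,d\tau = -\int_0^T G(\tau)\varphi(\tau)\,d\tau,
$$
i.e. $F' = G$ in $\mathcal{D}'((0,T))$. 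Since $u$ and its first two spatial derivatives and first time derivative are bounded on $\overline U\times K$ for each compact $K\subset(0,T)$, and the coefficients are locally integrable with respect to $|\mu|$, both $F$ and $G$ lie in $L^1_{\mathrm{loc}}((0,T))$. Hence $F$ admits an absolutely continuous representative $\widetilde F$ satisfying $\widetilde F(t)-\widetilde F(s)=\int_s^t G(\tau)\,d\tau$, and setting $J_u:=\{\tau\in(0,T):F(\tau)=\widetilde F(\tau)\}$ gives the first assertion.

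For the second assertion, the strengthened hypothesis $a^{ij},b^i,c\in L^1(U\times[0,T],\mu)$ forces $G\in L^1((0,T))$, so $\int_s^t G\,d\tau\to\int_0^t G\,d\tau$ as $s\to 0^+$ and $\widetilde F$ extends continuously to $\tau=0$. It remains to identify $\widetilde F(0^+)$ with $\int_{\mathbb{R}^d}u(x,0)\,\nu(dx)$. For $s\in J_u$ one decomposes
$$
F(s)-\int_{\mathbb{R}^d}u(x,0)\,\nu(dx) = \int_{\mathbb{R}^d}\bigl[u(x,s)-u(x,0)\bigr]\,\mu_s(dx) + \Bigl(\int_{\mathbb{R}^d}u(x,0)\,\mu_s(dx)-\int_{\mathbb{R}^d}u(x,0)\,\nu(dx)\Bigr).
$$
The second bracket tends to zero as $s\to 0^+$ by the initial condition, extended from $C_0^\infty$ to $C_0(\mathbb{R}^d)$ by uniform approximation; this extension requires $|\mu_s|(U)$ to remain bounded near $s=0$, which one obtains from the initial condition applied to a nonnegative $C_0^\infty$ bump majorizing $\mathbf{1}_U$ (working with the Jordan decomposition if $\mu$ is signed). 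The first bracket is dominated by $\sup_{x\in\overline U}|u(x,s)-u(x,0)|\cdot|\mu_s|(U)$ and vanishes by the uniform continuity of $u$ on $\overline U\times[0,T/2]$. The genuinely delicate step is this final identification of $\widetilde F(0^+)$, since the boundedness of $|\mu_s|(U)$ near $s=0$ is not postulated explicitly and must be extracted from the initial condition; everything else is soft distribution theory.
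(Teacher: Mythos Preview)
The paper does not supply its own proof of this lemma; it simply declares the assertion ``trivial'' and refers to \cite{BDR08} and \cite{BRSH11}. Your argument is the standard one and is correct: reduce to showing that $F'=G$ in $\mathcal{D}'((0,T))$ by testing against $u(x,\tau)\varphi(\tau)$, invoke the absolutely continuous representative, and for the initial condition pass to the limit $s\to 0^+$ using $G\in L^1((0,T))$ together with the weak convergence $\mu_s\to\nu$ on $C_0^\infty$.

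Your caveat about the boundedness of $|\mu_s|(U)$ near $s=0$ is well placed. For nonnegative $\mu_t$ (the only case actually used in the paper) your bump-function argument goes through cleanly: if $\zeta\in C_0^\infty$ with $\zeta\ge\mathbf{1}_U$, then $\mu_s(U)\le\int\zeta\,d\mu_s\to\int\zeta\,d\nu$. For genuinely signed $\mu$ the Jordan decomposition does not help directly, since the initial condition is imposed only on $\mu_s$ and not on $\mu_s^\pm$; in that generality one would need an additional a priori bound on $|\mu_s|(U)$, and the cited references typically work under a nonnegativity or subprobability hypothesis. This is a limitation of the lemma as stated rather than of your proof.
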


We shall say that a Borel measure $\sigma$ is a subprobability on $\mathbb{R}^d$
if $\sigma\ge 0$ and $\sigma(\mathbb{R}^d)\le 1$.
A subprobability measure $\sigma$ on $\mathbb{R}^d$ is probability if $\sigma(\mathbb{R}^d)=1$.

A function $V\in C^{1, 2}(\mathbb{R}^d\times(0, T))\bigcap C(\mathbb{R}^d\times[0, T))$
is termed a Lyapunov function if for every closed interval $[a, b]\subset(0, J)$ one has
$$
\lim_{|x|\to+\infty}\min_{t\in [a, b]}V(x, t)=+\infty.
$$

We shall obtain $L^p$ and $L^{\infty}$ local and global estimates
of the densities of solutions of equation~(\ref{e1}).
Our main interest is in the case of unbounded coefficients of the operator~$L$.
If the coefficients are globally bounded or have a linear growth, then there
are well-known Gaussian estimates (see, e.g., \cite{ArS} and \cite{Ed}).

Global boundedness of the densities (with upper estimates) for solutions
of the Cauchy problem for equation~(\ref{e1}) without any restrictions on the
growth of coefficients is established in \cite{BRSH1} for sufficiently
regular initial conditions. More precisely, the existence of a density of
the initial condition with finite entropy is required.
In \cite{ALR}, \cite{MPR2}, \cite{MPR1} and \cite{Spino} the
transition kernels of the semigroup $\{T_t\}$ are investigated such
that for every nonnegative bounded continuous function $f$
the function $T_tf$ is the minimal nonnegative solution of the Cauchy problem
$\partial_tu=Lu$, $u|_{t=0}=f$.
It is assumed there that the coefficients are locally H\"older continuous and
the diffusion matrix $A$ is uniformly nondegenerate and continuously differentiable.
Moreover, the coefficients do not depend on $t$.
The principal results of the cited papers give certain upper estimates of the kernel densities
and the continuity of semigroup $T_t$ in various functional classes.
The conditions on the coefficients in these papers are  formulated in terms of certain Lyapunov functions.
The kernel of $\{T_t\}$ satisfies equation (\ref{e1}), but the initial condition is Dirac's measure,
so the results from \cite{BRSH1} do not apply.

In \cite{SH0011} and \cite{SH10}, some estimates of densities are
 obtained for arbitrary initial conditions.
The main idea of these works is to deduce global bounds from local estimates in \cite{BKR1}
by using appropriate scalings.
Note that in \cite{SH0011} and \cite{SH10} the coefficients $b$ and $c$ are assumed to be
only integrable, but the diffusion matrix is assumed to uniformly bounded,
uniformly nondegenerate and uniformly Lipschitzian.

In the present work we generalize the results from \cite{SH0011} and \cite{SH10} to the case
where the diffusion matrix can be unbounded and need not be uniformly elliptic.
Moreover, we generalize the estimates  from \cite{MPR1} and \cite{Spino} involving Lyapunov functions.
The main difference between the estimates from  \cite{MPR1}, \cite{Spino}
and the usual estimates with Lypunov functions
is that the former do not depend on the initial condition.

Let us consider an example, illustrating our results.
Assume that $d=d_1+d_2\ge 2$, and will write $x=(x', x'')$,
where $x'\in\mathbb{R}^{d_1}$, $x''\in\mathbb{R}^{d_2}$.
Let $r>2$, $k>r$ and $\delta\in (0, 1)$. Set
$$
A(x, t)=e^{|x'|^{r-\delta}-|x''|^{r-\delta}}I, \quad b(x, t)=-x|x|^{r-2}e^{|x'|^{r-\delta}-|x''|^{r-\delta}}, \quad c(x, t)=-|x|^k.
$$
Let $\varrho$ is a density of a nonnegative solution $\mu=(\mu_t)$, satisfying the condition
$$
\mu_t(\mathbb{R}^d)\le \nu(\mathbb{R}^d)+\int_0^t\int_{\mathbb{R}^d}c(x, s)\,d\mu_s\,ds.
$$
Then there holds the following estimate
$$
\varrho(x, t)\le c_1\exp\bigl(-c_2|x|^r\bigr)\exp\bigl(c_3t^{-\frac{r}{k-r}}\bigr)
$$
for all $(x, t)\in\mathbb{R}^d\times(0, T)$ and some
positive numbers $c_1$, $c_2$ and $c_3$. Note, that the estimate does not depend on 
the initial condition $\nu$.
We consider more general situation in Example (\ref{ex55}).

It is worth mentioning that various lower estimates are considered in \cite{BRSH2}.
The existence and uniqueness problems are investigated
in \cite{BDR08} and \cite{BRSH11}.
A recent survey on elliptic and parabolic equations for measures is given in \cite{BKRU}.

The next section is concerned with estimates involving Lyapunov functions.
In the last section we obtain local and global $L^p$ and $L^{\infty}$ estimates and investigate the
behavior of densities at infinity.

\section{\sc Estimates with Lyapunov functions}

In this section we assume that $c\le 0$ and investigate a solution $\mu$ that is given by
a family of nonnegative measures $\mu_t$ such that $|c|\in L^1(\mu)$ and
\begin{equation}\label{con1}
\mu_t(\mathbb{R}^d)\le \nu(\mathbb{R}^d)+\int_0^t\int_{\mathbb{R}^d}c(x, s)\,\mu_s(dx)\,ds.
\end{equation}
In particular, $\mu_t$ are subprobability measures on $\mathbb{R}^d$.
There are no  other restrictions on the coefficients $a^{ij}$, $b^i$ and $c$.

Note that the kernels considered in \cite{MPR2} satisfy condition (\ref{con1}).
Moreover, in the case of globally bounded coefficients
any solution $\mu$ which is given by a family of  subprobability measures~$(\mu_t)_{t\in(0, T)}$
satisfies condition (\ref{con1}). Note also that if $c$ is continuous and
$\mu_t$ is a weak limit of a sequence of measures $\mu^n_t$ satisfying (\ref{con1}), then
 condition (\ref{con1}) is fulfilled for each~$\mu_t$. Hence this condition is fulfilled
 for every solution $\mu$ obtained as a limit of solutions of equations with bounded coefficients.
Thus, this is a natural condition that is a generalization of the hypothesis
that $\mu_t$ is a subprobability measure for almost all $t$ in the case $c=0$.

\begin{theorem}\label{th1}
Let $\mu=(\mu_t)_{0<t<T}$ be a solution of the Cauchy problem
$\partial_{t}\mu=L^{*}\mu$, \mbox{$\mu|_{t=0}=\nu$} such that
$c\le 0$, $\mu_t$ and $\nu$ are subprobability measures on $\mathbb{R}^d$
and condition~{\rm (\ref{con1})} holds.
Assume that there exists a Lyapunov function $V$ such that
for some positive functions $K, H\in L^1((0, T))$ one has
$$
\partial_tV(x, t)+LV(x, t)\le K(t)+H(t)V(x, t).
$$
Assume also that $V(\, \cdot \,,0)\in L^1(\nu)$.
Then for almost all $t\in(0, T)$
$$
\mu_t(\mathbb{R}^d)=\nu(\mathbb{R}^d)+\int_0^t\int_{\mathbb{R}^d}c(x, s)\,\mu_s(dx)\,ds
$$
and
$$
\int_{\mathbb{R}^d}V(x, t)\,\mu_t(dx)\le Q(t)+R(t)\int_{\mathbb{R}^d}V(x, 0)\,\nu(dx),
$$
where
$$
R(t)=\exp\Bigl(\int_0^tH(s)\,ds\Bigr), \quad Q(t)=R(t)\int_0^t\frac{K(s)}{R(s)}\,ds.
$$
\end{theorem}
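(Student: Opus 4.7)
The plan is a truncation-and-Gr\"onwall argument. Replace $V$ by a bounded approximation $V_N$ which, on each compact time subinterval, has compact spatial support; apply Lemma~\ref{lem01}; use~(\ref{con1}) to discard a non-positive term; apply Gr\"onwall; and pass to the limit. The mass equality will emerge from the very same identity, read in reverse.

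WLOG $V\ge 0$ (shifting $V$ by a non-negative constant preserves the Lyapunov inequality because $c\le 0$). Choose a smooth, non-decreasing, concave $\psi_N\colon[0,\infty)\to[0,\infty)$ with $\psi_N(s)=s$ for $s\le N$, $\psi_N(s)=C_N:=\psi_N(2N)$ for $s\ge 2N$, and $0\le\psi_N'\le 1$; set $V_N=\psi_N(V)$. A chain-rule computation using $a^{ij}\ge 0$, $\psi_N''\le 0$, $c\le 0$, and the concavity inequality $\psi_N(s)\ge s\psi_N'(s)$ yields $\partial_tV_N+LV_N\le K(t)+H(t)V_N$. By the Lyapunov property, for each compact $[a,b]\subset(0,T)$ there exists $R_0(N,a,b)$ with $V>2N$ on $\{|x|>R_0\}\times[a,b]$, so $V_N-C_N$ is $C^{1,2}$ and supported in $\overline{B}_{R_0}$ for $t\in[a,b]$. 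Lemma~\ref{lem01} applied to $u=V_N-C_N$ on $(s,t)\subset(a,b)$ (noting $L(C_N)=cC_N$) and rearrangement give
\[
\int V_N\,d\mu_t=\int V_N\,d\mu_s+\int_s^t\!\!\int(\partial_\tau V_N+LV_N)\,d\mu_\tau\,d\tau+C_N\,\delta(s,t),
\]
where $\delta(s,t):=\mu_t(\mathbb{R}^d)-\mu_s(\mathbb{R}^d)-\int_s^t\!\int c\,d\mu_\tau\,d\tau$.

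Passing to the limit $s\to 0^+$ (the main technical point; see below), condition~(\ref{con1}) gives $\delta(0,t)\le 0$, so the $C_N\delta(0,t)$ term may be dropped. Using $\partial_\tau V_N+LV_N\le K+HV_N$, $\mu_\tau(\mathbb{R}^d)\le 1$, and Gr\"onwall,
\[
\int V_N\,d\mu_t\le R(t)\int V_N(\cdot,0)\,d\nu+Q(t),
\]
and monotone convergence ($V_N\uparrow V$) delivers the stated $V$-estimate. For the mass equality, rewrite the $s=0$ identity as
\[
-C_N\,\delta(0,t)=\int V_N(\cdot,0)\,d\nu-\int V_N\,d\mu_t+\int_0^t\!\!\int(\partial_\tau V_N+LV_N)\,d\mu_\tau\,d\tau.
\]
Since $V_N\le V$ and $\partial_\tau V_N+LV_N\le K+HV_N$, the right-hand side is bounded uniformly in $N$ by $\int V(\cdot,0)\,d\nu+\int_0^t K\,d\tau+\int_0^t H(\tau)[R(\tau)\int V(\cdot,0)\,d\nu+Q(\tau)]\,d\tau$, using the $V$-estimate just proved. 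Because $\delta(0,t)\le 0$ is independent of $N$ and $C_N\to\infty$, this forces $\delta(0,t)=0$, which is the mass identity.

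The hard part is the limit $s\to 0^+$ in the key identity: the convergence $\int V_N\,d\mu_s\to\int V_N(\cdot,0)\,d\nu$ requires more than the weak initial condition on $C^\infty_0$ test functions, because $V_N$ is only bounded, not compactly supported. One obtains it by first running the Gr\"onwall argument on a compact subinterval $[a,b]\subset(0,T)$ to produce a local bound on $\int V\,d\mu_s$ for $s$ near $0$, then combining with the hypothesis $V(\cdot,0)\in L^1(\nu)$ and the Lyapunov property to extract uniform tightness of $\{\mu_s\}$ as $s\to 0^+$ and hence the desired limit.
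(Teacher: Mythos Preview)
Your argument is essentially the paper's own proof: truncate $V$ by a concave cutoff $\psi_N$ (the paper's $\zeta_N$), apply Lemma~\ref{lem01} to $\psi_N(V)-C_N$, use the concavity inequality $s\psi_N'(s)\le\psi_N(s)$ together with $c\le 0$ to control the potential term, drop the mass-defect term via~(\ref{con1}), apply Gr\"onwall, and let $N\to\infty$; the mass identity then follows because the defect $\delta(0,t)\le 0$ is multiplied by $C_N\to\infty$ while the remaining terms stay bounded. The only difference is presentational: you flag the passage $s\to 0^+$ as the delicate step and sketch a tightness argument, whereas the paper simply writes ``Letting $s\to 0$'' and implicitly relies on the second clause of Lemma~\ref{lem01}.
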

\begin{proof}
Let $\zeta_N\in C^2([0, +\infty))$ be such that $0\le \zeta'\le 1$, $\zeta''\le 0$,
and $\zeta_N(s)=s$ if $s\le N-1$ and $\zeta(s)=N$ if $s>N+1$.
Substitute the function $u=\zeta_{N}(V)-N$
in the equality in Lemma~\ref{lem01}. We obtain
\begin{align*}
\int_{\mathbb{R}^d}\zeta_{N}(V(x, t))\,\mu_t(dx)
&=\int_{\mathbb{R}^d}\zeta_{N}(V(x, s))\,\mu_s(dx)+
\\
&+
\left(\mu_t(\mathbb{R}^d)-\nu(\mathbb{R}^d)-\int_0^t\int_{\mathbb{R}^d}c(x, \tau)\,\mu_{\tau}(dx)\,d\tau\right)N+
\\
&+\int_s^t\int_{\mathbb{R}^d}\Bigl(\zeta_N'(V)(\partial_tV
+LV)+\zeta_N''(V)|\sqrt{A}\nabla V|^2\Bigr)\,\mu_{\tau}(dx)\,d\tau+
\\
&+\int_s^t\int_{\mathbb{R}^d}c\left(\zeta_N(V)-\zeta'_N(V)V\right)\,\mu_{\tau}(dx)\,d\tau.
\end{align*}
Noting that $z\zeta'_N(z)\le \zeta_N(z)$, we have
\begin{align*}
\int_{\mathbb{R}^d}\zeta_N(V(x, t))\,\mu_t(dx)
&\le
\int_{\mathbb{R}^d}\zeta_N(V(x, s))\,\mu_s(dx)+
\\
&
\left(\mu_t(\mathbb{R}^d)-\nu(\mathbb{R}^d)-\int_0^t\int_{\mathbb{R}^d}c(x, \tau)\,\mu_{\tau}(dx)\,d\tau\right)N+
\\
&+\int_s^t K(\tau)+H(\tau)\int_{\mathbb{R}^d}\zeta_N(V(x, \tau))\,\mu_{\tau}(dx)\,d\tau,\notag
\end{align*}
Letting $s\to 0$, we arrive at the inequality
\begin{multline}\label{in1}
\int_{\mathbb{R}^d}\zeta_N(V(x, t))\,d\mu_t\le
\\
\le
\int_{\mathbb{R}^d}\zeta_N(V(x, 0))\,d\nu+
\left(\mu_t(\mathbb{R}^d)-\nu(\mathbb{R}^d)-\int_0^t\int_{\mathbb{R}^d}c(x, \tau)\,\mu_{\tau}(dx)\,d\tau\right)N+
\\
+\int_0^tK(\tau)+H(\tau)\int_{\mathbb{R}^d}\zeta_N(V(x, \tau))\,\mu_{\tau}(dx)\,d\tau.
\end{multline}
Since
$$
\mu_t(\mathbb{R}^d)\le \nu(\mathbb{R}^d)+\int_0^t\int_{\mathbb{R}^d}c(x, \tau)\,\mu_{\tau}(dx)\,d\tau,
$$
the last inequality can be rewritten as
\begin{multline*}
\int_{\mathbb{R}^d}\zeta_N(V(x, t))\,\mu_t(dx)\le \int_{\mathbb{R}^d}\zeta_N(V(x, 0))\,\nu(dx)+
\\
+\int_0^tK(\tau)+H(\tau)\int_{\mathbb{R}^d}\zeta_N(V(x, \tau))\,\mu_{\tau}(dx)\,d\tau.
\end{multline*}
Applying Gronwall's inequality we obtain
$$
\int_{\mathbb{R}^d}\zeta_N(V(x, t))\,\mu_t(dx)\le Q(t)+R(t)\int_{\mathbb{R}^d}\zeta_N(V(x, 0))\,\nu(dx).
$$
Letting $N\to\infty$, we obtain the required estimate.
Note that if
$$\mu_t(\mathbb{R}^d)<\nu(\mathbb{R}^d)+\displaystyle\int_0^t\int_{\mathbb{R}^d}c(x, \tau)\,\mu_{\tau}(dx)\,d\tau,$$
then
$$
\int_{\mathbb{R}^d}V(x, t)\,\mu_t(dx)-\int_{\mathbb{R}^d}V(x, 0)\,\nu(dx)-
\int_0^tK(\tau)+H(\tau)\int_{\mathbb{R}^d}V(x, \tau)\,\mu_{\tau}(dx)\,d\tau=-\infty,
$$
which is impossible. Hence
$$
\mu_t(\mathbb{R}^d)=\nu(\mathbb{R}^d)+\int_0^t\int_{\mathbb{R}^d}c(x, \tau)\,\mu_{\tau}(dx)\,d\tau,
$$
which completes the proof.
\end{proof}

\begin{corollary}\label{cor1}
Let  $\mu=(\mu_t)_{0<t<T}$ be a solution
of the Cauchy problem $\partial_{t}\mu=L^{*}\mu$, $\mu|_{t=0}=\nu$,
where $c\le 0$, $\mu_t$ and $\nu$ are subprobability measures on $\mathbb{R}^d$
and condition~{\rm (\ref{con1})} holds.
Let a positive function $W\in C^2(\mathbb{R}^d)$ be such that
$\lim\limits_{|x|\to+\infty}W(x)=+\infty$.

{\rm (i)} If for some number $C>0$ and almost every $(x, t)\in \mathbb{R}^d\times(0, T)$
there holds the inequality
$$
LW(x, t)\le C+CW(x),
$$
then for almost every $t\in(0, T)$ we have
$$
\int_{\mathbb{R}^d}W(x)\,\mu_t(dx)\le \exp(Ct)+\exp(Ct)\int_{\mathbb{R}^d}W(x)\,\nu(dx).
$$

{\rm (ii)} Let $G$ be a positive continuous increasing function on $[0, +\infty)$ such that
$$
\int_1^{+\infty}\frac{ds}{sG(s)}<+\infty.
$$
Let $\eta$ be a continuous function on $[0, T)$ defined by the equality
$$
t=\int_0^{\eta(t)}\frac{ds}{sG(s^{-{\delta}})}, \quad \delta\in (0, 1).
$$
If for some number $C>0$ and almost every $(x, t)\in \mathbb{R}^d\times(0, T)$
there holds the inequality
$$
LW(x, t)\le C-W(x)G(W(x)),
$$
then for almost every $t\in(0, T)$ we have
$$
\int_{\mathbb{R}^d}W(x)\,\mu_t(dx)\le\frac{1}{(1-\delta)\eta^{\delta}(t)}+\frac{C}{\eta(t)}\int_0^t\eta(s)\,ds.
$$

{\rm (iii)} Let $G$ and $\eta$ be the functions mentioned in {\rm (ii)}.
Assume that for some number $C>0$ and almost every $(x, t)\in \mathbb{R}^d\times(0, T)$
there holds the inequality
$$
LW(x, t)+\eta(t)|\sqrt{A(x, t)}\nabla W(x)|^2\le C-W(x)G(W(x)).
$$
Then for almost every $t\in(0, T)$
$$
\int_{\mathbb{R}^d}\exp\bigl(\eta(t)W(x)\bigr)\,\mu_t(dx)\le
\exp\Bigl((1-\delta)^{-1}\eta^{1-\delta}(t)+C\int_0^t\eta(s)\,ds\Bigr).
$$
\end{corollary}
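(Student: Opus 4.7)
The uniform strategy is to apply Theorem \ref{th1} with a Lyapunov function $V(x,t)$ tailored to each case, reducing the verification to a pointwise differential inequality $\partial_t V + LV \le K(t) + H(t) V$. For \emph{(i)} the natural choice is $V(x,t) = W(x)$: the hypothesis $LW \le C + CW$ reads $\partial_t V + LV \le C + CV$, and Theorem \ref{th1} with $K = H = C$ gives $R(t) = e^{Ct}$ and $Q(t) = e^{Ct}(1 - e^{-Ct}) \le e^{Ct}$, which is the claimed bound.

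For \emph{(ii)} I would take $V(x,t) = \eta(t) W(x)$. The defining integral yields $\eta(0) = 0$ (so $V(\cdot,0) \equiv 0$) and, upon differentiation, $\eta'(t) = \eta(t) G(\eta(t)^{-\delta})$. The hypothesis gives $\partial_t V + LV = \eta' W + \eta LW \le \eta C + W[\eta' - \eta G(W)]$, and a case analysis at the threshold $W = \eta^{-\delta}$ bounds the bracket by $\eta'\eta^{-\delta}$: monotonicity of $G$ makes it nonpositive when $W \ge \eta^{-\delta}$, while $W < \eta^{-\delta}$ yields $\eta'W \le \eta'\eta^{-\delta}$. Theorem \ref{th1} then applies with $K(t) = C\eta + \eta'\eta^{-\delta}$ and $H = 0$; the identity $\int_0^t \eta'(s)\eta(s)^{-\delta}\,ds = \eta(t)^{1-\delta}/(1-\delta)$ (using $\eta(0)=0$) yields the bound on $\eta(t)\int W\,d\mu_t$, and division by $\eta(t)$ finishes.

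For \emph{(iii)} I would take $V(x,t) = \exp(\eta(t)W(x))$, so that $V(\cdot,0) \equiv 1$ and $\int V(\cdot,0)\,d\nu \le 1$. The chain rule gives $LV = V[\eta LW + \eta^2|\sqrt{A}\nabla W|^2 + c(1 - \eta W)]$, and combining with the hypothesis $LW + \eta|\sqrt{A}\nabla W|^2 \le C - WG(W)$ together with the case analysis from (ii) yields $\partial_t V + LV \le \psi(t) V + cV(1-\eta W)$, where $\psi(t) := \eta'\eta^{-\delta} + C\eta$ satisfies $\int_0^t \psi\,ds = \eta(t)^{1-\delta}/(1-\delta) + C\int_0^t \eta\,ds$. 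Were it not for the residual $cV(1-\eta W)$, Theorem \ref{th1} with $K = 0$ and $H = \psi$ would immediately deliver the target $\int V\,d\mu_t \le \exp(\int_0^t \psi\,ds)$.

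The main obstacle is precisely this residual: $cV(1-\eta W)$ is nonpositive when $\eta W \le 1$ but becomes positive where $\eta W > 1$, so it does not fit the $H(t)V$ form demanded by Theorem \ref{th1}. The plan is to retrace the truncation argument from the proof of Theorem \ref{th1} rather than invoke it as a black box: substituting $u = \zeta_N(V) - N$ in Lemma \ref{lem01} produces, from the $cu$ piece of $Lu$, a companion term $c(\zeta_N(V) - \zeta_N'(V)V) \le 0$; combining both $c$-contributions and using $\zeta_N'(V)V \le \zeta_N(V)$ leaves an integrand bounded by $\zeta_N(V)[\psi(t) + |c|(\eta W - 1)]$. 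The delicate remaining step is to absorb the positive part $|c|(\eta W - 1)_+$, exploiting the nonpositivity of $\zeta_N''(V)|\sqrt{A}\nabla V|^2$ supplied by truncation together with the mass constraint (\ref{con1}), and then to conclude via Gronwall and $N \to \infty$.
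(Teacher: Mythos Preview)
Your treatment of parts (i) and (ii) is correct and essentially identical to the paper's. The only cosmetic difference is in (ii): the paper bounds $\eta' W - \eta W G(W)$ via the Young-type inequality $\alpha\beta \le \alpha G^{-1}(\alpha) + \beta G(\beta)$ with $\alpha=\eta'/\eta$, $\beta=W$ (using $G^{-1}(\eta'/\eta)=\eta^{-\delta}$), whereas you argue by cases at the threshold $W=\eta^{-\delta}$. Both yield $\eta' W - \eta W G(W)\le \eta'\eta^{-\delta}$, and the rest is the same.

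For part (iii), the paper proceeds exactly as you begin: set $V=\exp(\eta(t)W(x))$, write $\partial_t V + LV \le [\eta'W - \eta W G(W) + C\eta]\,V \le \psi(t)V$, and apply Theorem~\ref{th1} with $K=0$, $H=\psi$. The paper does \emph{not} record the residual $cV(1-\eta W)$ that you isolate; it simply drops it. You are right that this term is not pointwise nonpositive once $\eta W>1$, so your objection is legitimate against the paper's displayed inequality. In that sense you have spotted a point the paper glosses over rather than invented a difficulty.

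However, your proposed repair is not a proof. Retracing the truncation in Theorem~\ref{th1} and combining the two $c$-contributions gives the single term $c\,[\zeta_N(V)-\zeta_N'(V)V\ln V]$ (since $\eta W=\ln V$), and with the standard concave cutoffs $\zeta_N$ one has $\zeta_N(V)-\zeta_N'(V)V\ln V<0$ for $V>e$, so the sign is still wrong; neither $\zeta_N''(V)|\sqrt{A}\nabla V|^2\le 0$ nor condition~(\ref{con1}) supplies any quantitative control of $|c|$ to absorb this. As written, your argument for (iii) stops at exactly the place where the real work would have to be done; you have identified the obstacle but not removed it.
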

\begin{proof}
In order to prove (i) it is enough to apply
 Theorem \ref{th1} with $H(t)=K(t)=C$ and $V(x, t)=W(x)$.

Let us prove (ii). Let $V(x, t)=\eta(t)W(x)$. Set
$$
\partial_tV(x, t)+LV(x, t)\le \eta'(t)W(x)-\eta(t)W(x)G(W(x))+C\eta(t).
$$
Note that for all nonnegative numbers $\alpha$ and $\beta$
$$
\alpha\beta\le \alpha G^{-1}(\alpha)+\beta G(\beta),
$$
where $G^{-1}$ is the inverse function to $G$. Applying this inequality with
$\alpha=\eta'/\eta$ and $\beta=W$, we obtain
$$
\partial_tV(x, t)+LV(x, t)\le
\eta'(t)G^{-1}\bigl(\eta'(t)/\eta(t)\bigr)+C\eta(t)
=\frac{\eta'(t)}{\eta^{\delta}(t)}+C\eta(t),
$$
because our assumptions imply that
$
\eta'(t)=\eta(t)G(\eta^{-\delta}(t)).
$

Applying Theorem \ref{th1} with $H(t)=0$ and
$K(t)=\frac{\eta'(t)}{\eta^{\delta}(t)}+C\eta(t)$,
we arrive at to the required inequality.

Let us prove (iii). Let $V(x, t)=\exp\bigl(\eta(t)W(x)\bigr)$. Then
$$
\partial_tV(x, t)+LV(x, t)\le
\Bigl[\eta'(t)W(x)-\eta(t)W(x)G(W(x))+C\eta(t)\Bigr]\exp\bigr(\eta(t)W(x)\bigl).
$$
Hence
$$
\partial_tV(x, t)+LV(x, t)\le \Bigl[\frac{\eta'(t)}{\eta^{\delta}(t)}+C\eta(t)
\Bigr]\exp\bigr(\eta(t)W(x)\bigl).
$$
Applying Theorem \ref{th1} with $K(t)=0$ and
$$
H(t)=\frac{\eta'(t)}{\eta^{\delta}(t)}+C\eta(t),
$$
we obtain the required assertion.
\end{proof}

Let us consider several examples.

\begin{example}\label{exm0}{\rm
Set $V(x, t)=|x|^{r}$, where $r\ge 2$. Then
$$
LV(x, t)=r|x|^{r-2}\hbox{\rm trace}\, A(x, t)
+r(r-2)|x|^{r-4}(A(x, t)x, x)
+r|x|^{r-2}(b(x, t), x)+|x|^rc(x, t).
$$
Assume that for some numbers $C_1>0$, $C_2>0$ and all $(x, t)\in \mathbb{R}^d\times[0, T]$ we have
$$
r\hbox{\rm trace}\, A(x, t)
+r(r-2)|x|^{-2}(A(x, t)x, x)
+r(b(x, t), x)+|x|^{2}c(x, t)\le C_1+C_2|x|^2.
$$
Let $|x|^{r}\in L^1(\nu)$. Then
$$
\int_{\mathbb{R}^d}|x|^{r}\,\mu_t(dx)\le e^{C_3t}
+e^{C_3t}\int_{\mathbb{R}^d}|x|^{r}\,\nu(dx)
$$
for almost every $t\in(0, T)$ and some $C_3>0$.}
\end{example}

\begin{example}\label{exm1}{\rm
Set $V(x, t)=\exp(\alpha|x|^{r})$, where $r\ge 2$. Then
\begin{multline*}
LV(x, t)=\exp(\alpha|x|^r)
\Bigl[\alpha r|x|^{r-2}\hbox{\rm trace}\, A(x, t)+
\\
+\alpha r(r-2)|x|^{r-4}(A(x, t)x, x)+\alpha^2r^2|x|^{2r-4}(A(x, t)x, x)+
\\
+\alpha r|x|^{r-2}(b(x, t), x)+c(x, t)\Bigr].
\end{multline*}
Suppose that there exists a number $C_1$ such that for every
$(x, t)\in \mathbb{R}^d\times[0, T]$ we have
\begin{multline*}
\alpha r|x|^{r-2}\hbox{\rm trace}\, A(x, t)+
\\
+\alpha r(r-2)|x|^{r-4}(A(x, t)x, x)+\alpha^2r^2|x|^{2r-4}(A(x, t)x, x)+
\\
+\alpha r|x|^{r-2}(b(x, t), x)+c(x, t)\le C_1.
\end{multline*}
If $\exp(|x|^{r})\in L^1(\nu)$, then
$$
\int_{\mathbb{R}^d}\exp(\alpha|x|^{r})\,\mu_t(dx)\le e^{C_2t}
+e^{C_2t}\int_{\mathbb{R}^d}\exp(\alpha|x|^{r})\,\nu(dx).
$$
for almost all $t\in (0, T)$.
}
\end{example}

\begin{example}\label{exm2}{\rm
Let $k>2$ and $r\ge 2$. Assume that
$$
r\hbox{\rm trace}\, A(x, t)
+r(r-2)|x|^{-2}(A(x, t)x, x)
+r(b(x, t), x)+|x|^2c(x, t)\le C_1-C_2|x|^k,
$$
where $C_1>0$ and $C_2>0$. Then
$$
L|x|^r\le C_3-C_3|x|^{r+k-2}
$$
for some $C_3>0$.
Set $W(x)=|x|^{r}$ and $G(z)=C_3z^{\sigma}$, where $\sigma=(k-2)/r>0$. Hence
$$
LW(x, t)\le C_3-WG(W(x)).
$$
Then $\eta(t)=C_4t^{\frac{1}{\delta\sigma}}$, where $C_4$ depends on
$C_3$, $\delta$ and $\sigma$.
By Corollary \ref{cor1} we obtain the estimate
$$
\int_{\mathbb{R}^d}|x|^r\,\mu_t(dx)\le \frac{\gamma}{t^{\frac{r}{k-2}}},
$$
where $\gamma$ depends on $C_1$, $C_2$, $\delta$, $\sigma$.}
\end{example}

\begin{example}\label{3exm03}{\rm
Let $r>2$ and $k>r$. Assume that
\begin{multline*}
\alpha r|x|^{r-2}\hbox{\rm trace}\, A(x, t)+
\\
+\alpha r(r-2)|x|^{r-4}(A(x, t)x, x)+\alpha^2r^2|x|^{2r-4}(A(x, t)x, x)+
\\
+\alpha r|x|^{r-2}(b(x, t), x)+c(x, t)\le C_1-C_2|x|^k,
\end{multline*}
where $C_1>0$ and $C_2>0$. Then
$$
L\exp(\alpha|x|^r)\le C_3-C_3|x|^{k}\exp(\alpha|x|^r)
$$
for some $C_3>0$.
Set $W(x)=\exp(\alpha|x|^{r})$ and $G(z)=C_3|\ln z|^{\sigma}$ if $z\ge 2$, where $\sigma=\frac{k}{r}>1$.
We obtain
$$
LW(x, t)\le C_3-WG(W(x)).
$$
Then $\eta(t)=C_4\exp(-C_5t^{\frac{-1}{\sigma-1}})$, where $C_4>0$ and $C_5>0$
depend on $C_3$, $\delta$ and $\sigma$.
By Corollary \ref{cor1} we have
$$
\int_{\mathbb{R}^d}\exp(\alpha|x|^r)\,\mu_t(dx)\le \gamma_1\exp\bigl(\frac{\gamma_2}{t^{\frac{r}{k-r}}}\bigr),
$$
where $\gamma_1$ and $\gamma_2$ depend on $C_1$, $C_2$, $\delta$ and $\sigma$.}
\end{example}

\begin{example}\label{3exm04}{\rm
Let $r>2$, $k>2$ and $\alpha>0$. Assume that
\begin{multline*}
\alpha r\hbox{\rm trace}\, A(x, t)
+\alpha r(r-2)|x|^{-2}(A(x, t)x, x)+
\\
+\alpha r(b(x, t), x)+\alpha|x|^2c(x, t)+\alpha^2r^2|x|^{r-2}(A(x, t)x, x)\le C_1-C_2|x|^k,
\end{multline*}
where $C_1>0$ and $C_2>0$. Then
$$
\alpha L|x|^r+\alpha^2r^2|x|^{2r-4}(A(x, t)x, x)\le C_3-C_3|x|^{k+r-2}.
$$
Set $W(x)=\alpha|x|^{r}$ and
$G(z)=C_3\alpha^{-(1+\sigma)/\sigma}z^{\sigma}$, where $\sigma=\frac{k-2}{r}>0$.
We obtain
$$
LW(x, t)+|\sqrt{A(x, t)}\nabla W(x)|^2\le C_3-WG(W(x)).
$$
Hence we can apply Corollary \ref{cor1} with $\delta\in (0, 1)$,
$\eta(t)=C_4t^{\frac{1}{\delta\sigma}}$,
where $C_4$ depends on $C_3$, $\delta$ and $\sigma$.
Thus, for every $\beta>\frac{r}{k-2}$
we obtain the estimate
$$
\int_{\mathbb{R}^d}\exp(\alpha t^{\beta}|x|^r)\,\mu_t(dx)\le
\gamma_1\exp\Bigl(\gamma_2( t^{\beta-\frac{r}{k-2}}+t^{\beta+1})\Bigr),
$$
where the numbers $\gamma_1$ and $\gamma_2$ depend on $C_1$, $C_2$, $r$ and $\beta$.}
\end{example}

Note that the estimates in Examples \ref{3exm03} and \ref{3exm04} do not depend on
the initial condition. If we apply these estimates to the transition probabilities
$P(y, 0, t, \,dx)$ of the corresponding processes, then the resulting estimates will be
uniform  in $y$.
Such estimates for kernels of diffusion semigroups (with possibly rapidly growing drifts)
were first obtained in \cite{MPR1} and \cite{Spino}.

\section{\sc Local and global bounds of solutions}

In this section we obtain local and global $L^p$ and $L^{\infty}$ estimates of densities of solutions.
The main idea is to use a modification of Moser's iteration method (see \cite{Ms}).
We start with local estimates and then
we obtain global estimates by using local one and a suitable scaling.

Let $\mu=(\mu_t)_{t\in (0, T)}$ be a nonnegative solution of equation (\ref{e1}).

We assume that $A=(a^{ij})$ is a symmetric matrix satisfying the following condition:

(H1) for some number $p>d+2$, every ball $U\subset\mathbb{R}^d$ and
every segment $J\subset(0, T)$ one has
$$
\sup_{t\in J}\|a^{i, j}(\, \cdot \,, t)\|_{W^{1, p}(U)}<\infty
$$
and
$$
0<\lambda(U, J):=\inf\bigl\{(A(x, t)\xi, \xi): \, |\xi|=1, \, (x, t)\in U\times J\bigr\}.
$$

We also suppose that

(H2) for some number $p>d+2$, every ball $U\subset\mathbb{R}^d$ and
every closed interval  $J\subset(0, T)$ one has
$$
b, c\in L^p(U\times J) \quad {\rm or} \quad b, c\in L^p(U\times J, \mu).
$$

According to \cite[Corollary 3.9]{BKR1} and \cite[Corollary 2.2]{BRSH3}, conditions (H1) and (H2)
yield existence of a H\"older continuous density $\varrho$ of the solution $\mu$
with respect to Lebesgue measure.
Moreover, for every ball $U\subset\mathbb{R}^d$ and
every closed interval $J\subset(0, T)$ we have $\varrho(\, \cdot \,, t)\in W^{1, p}(U)$ and
$$
\int_{J}\|\varrho(\, \cdot \,, t)\|_{W^{1, p}(U)}^p\,dt<\infty.
$$

Set $B^i=b^i-\partial_{x_j}a^{ij}$. Then we can rewrite equation (\ref{e1}) in the divergence form
\begin{equation}\label{e3.1}
\partial_t\varrho={\rm div}\bigl(A\nabla\varrho-B\varrho\bigr)+c\varrho,
\end{equation}
which is understood in the sense of the  integral identity
\begin{equation}\label{id3}
\int_0^T\int_{\mathbb{R}^d}\bigl[
-\varrho\partial_t\varphi+(A\nabla\varrho, \nabla\varphi)\bigr]\,dx\,dt=
\int_0^T\int_{\mathbb{R}^d}\bigl[(B, \nabla\varphi)\varrho+c\varrho\varphi\bigr]\,dx\,dt
\end{equation}
for every function $\varphi\in C^{\infty}_0(\mathbb{R}^d\times(0, T))$.

Recall the following embedding theorem (see \cite[Lemma 3.1]{BRSH1} or \cite{ArS}).

\begin{lemma}\label{lem1}
Let $J$ be a closed interval in $(0, T)$ and let $u(\,\cdot\,, t)\in W^{1, 2}(\mathbb{R}^d)$
be such that $x\mapsto u(x, t)$ has compact support for almost all $t\in J$.
Then there exists a constant $C>0$ depending only on $d$ such that
$$
\|u\|_{L^{2(d+2)/d}(\mathbb{R}^d\times J)}\le C\bigl(\sup_{t\in J}\|u(\,\cdot\,, t)\|_{L^2(\mathbb{R}^d)}
+\|\nabla u\|_{L^2(\mathbb{R}^d\times J)}\bigr).
$$
\end{lemma}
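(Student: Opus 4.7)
The plan is to prove the parabolic Sobolev embedding by combining a spatial Gagliardo--Nirenberg inequality applied slicewise in time with integration against $t$. First, I would fix $t\in J$ (where $u(\,\cdot\,,t)$ lies in $W^{1,2}(\mathbb{R}^d)$ with compact support) and invoke the interpolation inequality
$$
\|u(\,\cdot\,,t)\|_{L^{q}(\mathbb{R}^d)}\le C\|\nabla u(\,\cdot\,,t)\|_{L^2(\mathbb{R}^d)}^{\theta}\|u(\,\cdot\,,t)\|_{L^2(\mathbb{R}^d)}^{1-\theta},
$$
where $q=2(d+2)/d$ and $\theta=d/(d+2)$, these values being uniquely determined by scaling. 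For $d\ge 3$ this follows at once from the classical Sobolev embedding $W^{1,2}\hookrightarrow L^{2d/(d-2)}$ together with one H\"older interpolation between $L^2$ and $L^{2d/(d-2)}$; for $d=1,2$ the analogous one- or two-dimensional Sobolev inequalities furnish the same bound, the compact support of $u(\,\cdot\,,t)$ ensuring that all quantities are finite.

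Next, I would raise the pointwise inequality to the power $q$. Since $\theta q=2$ and $(1-\theta)q=4/d$, this gives
$$
\|u(\,\cdot\,,t)\|_{L^q(\mathbb{R}^d)}^q\le C\|\nabla u(\,\cdot\,,t)\|_{L^2(\mathbb{R}^d)}^2\|u(\,\cdot\,,t)\|_{L^2(\mathbb{R}^d)}^{4/d}.
$$
Integrating over $t\in J$, bounding the second factor by $M^{4/d}$ with $M:=\sup_{t\in J}\|u(\,\cdot\,,t)\|_{L^2(\mathbb{R}^d)}$, and using Fubini to identify the left-hand side with $\|u\|_{L^q(\mathbb{R}^d\times J)}^q$, I obtain
$$
\|u\|_{L^q(\mathbb{R}^d\times J)}^q\le C\,M^{4/d}\,\|\nabla u\|_{L^2(\mathbb{R}^d\times J)}^2.
$$

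Finally, extracting $q$-th roots and noting that the resulting exponents satisfy $4/(qd)+2/q=1$, I would apply Young's inequality $a^{\alpha}b^{1-\alpha}\le a+b$ to convert the product $M^{4/(qd)}\|\nabla u\|_{L^2(\mathbb{R}^d\times J)}^{2/q}$ into the sum appearing on the right-hand side of the lemma, with constant depending only on $d$.

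The conceptual heart of the argument is the choice of the parabolic exponent $q=2(d+2)/d$, which is the unique exponent balancing the scaling of $\sup_t\|u(\,\cdot\,,t)\|_{L^2}$ and $\|\nabla u\|_{L^2(\mathbb{R}^d\times J)}$; once Gagliardo--Nirenberg is set up with those exponents, Fubini and Young do the rest. The only mildly delicate point is organizing the low-dimensional cases $d=1,2$ so that a single constant suffices, but this is essentially bookkeeping rather than a genuine obstacle.
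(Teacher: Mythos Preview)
Your argument is correct and is precisely the standard route to this parabolic embedding: slicewise Gagliardo--Nirenberg with $q=2(d+2)/d$ and $\theta=d/(d+2)$, integration in $t$ against the $\sup_t L^2$-bound, then Young's inequality to pass from the product to the sum. The exponent bookkeeping ($\theta q=2$, $(1-\theta)q=4/d$, and $4/(qd)+2/q=1$) is accurate, and your remark about handling $d=1,2$ via the low-dimensional Gagliardo--Nirenberg inequalities is the right way to cover all cases uniformly.

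Note, however, that the paper does not supply its own proof of this lemma: it simply records the statement and refers the reader to \cite[Lemma~3.1]{BRSH1} and to Aronson--Serrin \cite{ArS}. The proof you have written is essentially the one found in those sources (and in many textbook treatments of Moser iteration), so there is no substantive difference in approach to discuss---you have filled in exactly the argument the paper chose to cite rather than reproduce.
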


Note that now  we  do not assume that $c$ is a nonpositive function.

Let $c^{+}(x, t)=\max\{c(x, t), 0\}$.

The following lemma is the key step of our proof.

\begin{lemma}\label{lem2}
Let $m\ge 1$. Let $U\subset\mathbb{R}^d$ be a ball and let $[s_1, s_2]\subset (0, T)$.
Assume that $\psi\in C^{\infty}_0(\mathbb{R}^d\times(0, T))$ is such that
 the support of $\psi$ is contained in $U\times(0, T)$
and $\psi(x, s_1)=0$ for every $x$. Then
there exists a constant $C(d)$ depending only on $d$ such that
\begin{multline}\label{ineq3}
\Bigl(\int_{s_1}^{s_2}\int_{U}|\varrho^m\psi|^{2(d+2)/d}\,dx\,dt\Bigr)^{d/(d+2)}\le
\\
\le 32C(d)m^2(1+\lambda^{-1})
\int_{s_1}^{s_2}\int_{U}\bigl[|\psi||\psi_t|+\|A\||\nabla\psi|^2
+|\sqrt{A^{-1}}B|^2\psi^2+c^{+}\psi^2\bigr]\varrho^{2m}\,dx\,dt,
\end{multline}
where $\|A(x, t)\|=\min_{|\xi|=1}(A(x, t)\xi, \xi)$ and $\lambda=\lambda(U, [s_1, s_2])$
is defined as above.
\end{lemma}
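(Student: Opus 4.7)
My plan is a Moser-type energy estimate. I would test the divergence-form identity (\ref{id3}) with $\varphi=\varrho^{2m-1}\psi^2$ cut off in time so that $\varphi$ is supported in $U\times[s_1,\tau]$ for some $\tau\in(s_1,s_2]$, integrate by parts in $t$ to push the derivative off $\varrho$, apply Cauchy--Schwarz and Young's inequality to absorb the $A$- and $B$-cross gradient terms into the coercive part, and finally feed the resulting control of $\sup_\tau\|u\psi\|_{L^2}^2+\|\nabla(u\psi)\|_{L^2}^2$ (with $u:=\varrho^m$) into Lemma~\ref{lem1}. The H\"older continuity and $L^p_tW^{1,p}_x$-regularity of $\varrho$ provided by the references cited after (H2) make $\varrho^{2m-1}\psi^2$ a legitimate test function in (\ref{id3}) after a standard time mollification of $\varrho$; I would suppress this technicality.

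\textbf{Energy identity.} With $u=\varrho^m$, so that $\nabla u=m\varrho^{m-1}\nabla\varrho$, the spatial pairing decomposes as
$$
(A\nabla\varrho,\nabla\varphi)=\tfrac{2m-1}{m^2}\,\psi^2(A\nabla u,\nabla u)+\tfrac{2}{m}\,\psi u\,(A\nabla u,\nabla\psi),
$$
while the $\partial_t$-term, after integration by parts in $t$ using $\psi(\cdot,s_1)=0$, evaluates to
$$
-\!\int_{s_1}^{\tau}\!\!\int\varrho\,\partial_t\varphi\,dx\,dt=-\tfrac{2m-1}{2m}\!\int u^2\psi^2(\cdot,\tau)\,dx-\tfrac{1}{m}\!\int_{s_1}^{\tau}\!\!\int u^2\psi\psi_t\,dx\,dt.
$$
The right-hand side of (\ref{id3}) produces $\tfrac{2m-1}{m}\psi^2 u(B,\nabla u)+2\psi u^2(B,\nabla\psi)+c\psi^2u^2$.

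\textbf{Absorption and embedding.} I would then bound $|(B,\nabla u)|\le|\sqrt{A^{-1}}B||\sqrt{A}\nabla u|$ and $|(A\nabla u,\nabla\psi)|\le|\sqrt{A}\nabla u||\sqrt{A}\nabla\psi|$, and apply Young's inequality with parameter comparable to $(2m-1)/m^2$ to shift a fixed fraction of the coercive term $\tfrac{2m-1}{m^2}\int\!\!\int\psi^2(A\nabla u,\nabla u)$ to the right. Bounding $c\le c^+$ and handling $2\psi u^2(B,\nabla\psi)$ by a scalar Young's inequality (which produces $u^2\psi^2|\sqrt{A^{-1}}B|^2$ and $u^2\|A\||\nabla\psi|^2$), I obtain, after taking the supremum in $\tau$,
\begin{align*}
\sup_{\tau\in[s_1,s_2]}\!\int u^2\psi^2(\cdot,\tau)\,dx &+ \tfrac{1}{m^2}\!\int_{s_1}^{s_2}\!\!\int\psi^2(A\nabla u,\nabla u)\,dx\,dt\\
&\le C\,m\!\!\int_{s_1}^{s_2}\!\!\int\!\bigl[|\psi\psi_t|+\|A\||\nabla\psi|^2+|\sqrt{A^{-1}}B|^2\psi^2+c^+\psi^2\bigr]u^2\,dx\,dt,
\end{align*}
for an absolute $C$, the extra $m$ on the right coming from the $(2m-1)/m$ coefficient of the $B$-cross term. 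Combining this with $|\nabla(u\psi)|^2\le 2\psi^2|\nabla u|^2+2u^2|\nabla\psi|^2$, the ellipticity $\psi^2|\nabla u|^2\le\lambda^{-1}\psi^2(A\nabla u,\nabla u)$, and $|\nabla\psi|^2\le\lambda^{-1}\|A\||\nabla\psi|^2$ yields $\sup_\tau\|u\psi\|_{L^2}^2+\|\nabla(u\psi)\|_{L^2}^2\le Cm^2(1+\lambda^{-1})\int\!\!\int E u^2$, with $E$ equal to the bracket in (\ref{ineq3}); the second power of $m$ arises from the factor $m^2/(2m-1)$ picked up upon isolating the gradient. Squaring Lemma~\ref{lem1} applied to $u\psi$ then delivers (\ref{ineq3}) with the stated constant $32C(d)m^2(1+\lambda^{-1})$.

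\textbf{Main obstacle.} The delicate point is the bookkeeping in the absorption step: each cross term must be split so that the coefficient of the surviving $u^2$-weighted quantity grows no faster than $m^2$, which is precisely what will allow the Moser iteration in the next section to run. Justifying the use of the non-smooth test function $\varrho^{2m-1}\psi^2$ is a secondary but routine technicality, handled by mollifying $\varrho$ in time inside (\ref{id3}) and passing to the limit using the H\"older continuity and parabolic Sobolev regularity of $\varrho$.
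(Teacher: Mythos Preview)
Your approach is essentially the paper's own: test the divergence-form identity (\ref{id3}) with $\varphi=f'(\varrho)\psi^2$ for $f(\varrho)=\varrho^{2m}$ (the paper first writes the energy inequality for a general smooth convex increasing $f$ and then specializes, whereas you go straight to the power), absorb the $A$- and $B$-cross terms by Young's inequality in the $\sqrt{A}$- and $\sqrt{A^{-1}}$-metrics exactly as you indicate, and then invoke Lemma~\ref{lem1}. One slip to fix: your displayed evaluation of the $\partial_t$-term has the wrong sign/coefficient on the boundary contribution (the time cut-off at $\tau$ produces $+\int u^2\psi^2(\cdot,\tau)$, so the net coefficient of $\int u^2\psi^2(\cdot,\tau)$ on the left is $+\tfrac{1}{2m}$, not $-\tfrac{2m-1}{2m}$); this is harmless for the final estimate but would otherwise make the energy term appear with the wrong sign.
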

\begin{proof}
Let $f$ be a smooth function on $[0, +\infty)$ such that  $f\ge 0$, $f'\ge 0$, $f''\ge 0$.
Substituting the function $\varphi=f'(\varrho)\psi^2$ in equality (\ref{id3}),
 for any $t\in[s_1, s_2]$ we obtain
\begin{multline*}
\int_{\mathbb{R}^d}f(\varrho(x, t))\psi^2(x)\,dx-\int_{\mathbb{R}^d}f(\varrho(x, s_1))\psi^2(x)\,dx+
\\
+\frac{1}{3}\int_{s_1}^t\int_{\mathbb{R}^d}|\sqrt{A}\nabla\varrho|^2f''(\varrho)\psi^2\,dx\,d\tau\le
\\
\int_{s_1}^t\int_{\mathbb{R}^d}2|\psi||\psi_t|f(\varrho)+3|\sqrt{A}\nabla\psi|^2\frac{f'(\varrho)^2}{f''(\varrho)}
+3|\sqrt{A^{-1}}B|^2\varrho^2f''(\varrho)\psi^2+
\\
+2|(B, \nabla\psi)|\psi\varrho f'(\varrho)+c^{+}\varrho f'(\varrho)\psi^2\,dx\,d\tau.
\end{multline*}
To this end, it is enough to note that
$$
2(A\nabla\varrho, \nabla\psi)\psi f'(\varrho)\le 3^{-1}|\sqrt{A}\nabla\varrho|^2f''(\varrho)\psi^2+
3|\sqrt{A}\nabla\psi|^2\frac{f'(\varrho)^2}{f''(\varrho)},
$$
$$
(B, \nabla\varrho)\varrho f''(\varrho)\psi^2\le 3^{-1}|\sqrt{A}\nabla\varrho|^2f''(\varrho)\psi^2+
3|\sqrt{A^{-1}}B|^2\varrho^2f''(\varrho)\psi^2.
$$
Set $f(\varrho)=\varrho^{2m}$. Recall that $\psi(x, s_1)=0$. We have
\begin{multline*}
\sup_{t\in [s_1, s_2]}\int_{\mathbb{R}^d}\varrho^{2m}(x, t)\psi^2(x)\,dx+
\frac{4m-2}{3m}\int_{s_1}^{s_2}\int_{\mathbb{R}^d}|\sqrt{A}\nabla(\varrho^m\psi)|^2\,dx\,d\tau\le
\\
\le32m^2\int_{s_1}^{s_2}\int_{\mathbb{R}^d}\bigl[|\psi||\psi_t|+|\sqrt{A}\nabla\psi|^2
+|\sqrt{A^{-1}}B|^2\psi^2+c^{+}\psi^2\bigr]\varrho\,dx\,d\tau.
\end{multline*}
Now our assertion follows from Lemma \ref{lem1}.
\end{proof}

\begin{theorem}\label{th3-1}{\rm ($L^p$-estimates)}
Let $p\ge 2(d+2)/d$. Let $U$ and $U'$ be balls in $\mathbb{R}^d$ with $\overline{U'}\subset U$.
Let also $[s_1, s_2]\subset (0, T)$.
Then, for every $s\in (s_1, s_2)$, there exists a number
$C>0$ depending on $U$, $U'$, $s$, $s_1$, $d$ and $p$
such that
$$
\|\varrho\|_{L^p(U'\times[s, s_2])}\le C(1+\lambda^{-1})^{\gamma}
\int_{s_1}^{s_2}\int_{U}\bigl[1+\|A\|^{\gamma}
+|c^{+}|^{\gamma}+|\sqrt{A^{-1}}B|^{2\gamma}\bigr]\varrho\,dx\,dt,
$$
where $\gamma=(d+2)/2p'$, $p'=p/(p-1)$
and $\lambda=\lambda(U, [s_1, s_2])$, $\|A\|$ are defined as above.
\end{theorem}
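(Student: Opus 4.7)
The plan is to run a Moser-type argument from Lemma~\ref{lem2} using a single carefully chosen exponent. Set $q = 2(d+2)/d$ and $m = p/q$, so that $m \geq 1$ by the hypothesis $p \geq q$. Fix a smooth cutoff $\psi$ equal to $1$ on $U' \times [s, s_2]$ and vanishing outside a slightly enlarged cylinder contained in $U \times (s_1, s_2)$. Inserting this $\psi$ into~(\ref{ineq3}), the left-hand side dominates $\|\varrho\|_{L^p(U' \times [s, s_2])}^{2m}$ (since $\varrho^m\psi = \varrho^m$ and $\varrho^{mq} = \varrho^p$ on the inner cylinder), while the right-hand side equals $Cm^2(1+\lambda^{-1})\int F\varrho^{2m}\,dx\,dt$ with
\[
F = |\psi||\psi_t| + \|A\||\nabla\psi|^2 + |\sqrt{A^{-1}}B|^2\psi^2 + c^+\psi^2.
\]

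Next, write $\varrho^{2m} = \varrho^{1/\gamma}\varrho^{2m - 1/\gamma}$ and apply H\"older's inequality with exponents $\gamma$ and $\gamma' = \gamma/(\gamma-1)$. The precise value $\gamma = (d+2)/(2p')$ is exactly what enforces the identity $(2m - 1/\gamma)\gamma' = p$ (equivalently, $\gamma = (p-1)/(p - 2m)$), which gives
\[
\int F \varrho^{2m}\,dx\,dt \leq \Bigl(\int F^\gamma \varrho\,dx\,dt\Bigr)^{1/\gamma}\,\|\varrho\|_{L^p({\rm supp}\,\psi)}^{p/\gamma'}.
\]
Expanding $F^\gamma$ and using the cutoff bounds on $|\psi_t|$ and $|\nabla\psi|$ shows $\int F^\gamma \varrho\,dx\,dt \leq C\,\Phi$ where
\[
\Phi := \int_{s_1}^{s_2}\int_U \bigl[1 + \|A\|^\gamma + (c^+)^\gamma + |\sqrt{A^{-1}}B|^{2\gamma}\bigr]\varrho\,dx\,dt
\]
and $C$ depends on $s-s_1$, ${\rm dist}(U',\partial U)$, $d$, $\gamma$. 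Since $\gamma > 1$ forces $p/\gamma' < 2m$, Young's inequality with exponents $2m\gamma'/p$ and $a' := 2m/(2m - p/\gamma')$ yields
\[
\|\varrho\|_{L^p(U' \times [s, s_2])}^{2m} \leq \tfrac{1}{2}\|\varrho\|_{L^p({\rm supp}\,\psi)}^{2m} + C(1+\lambda^{-1})^{a'}\Phi^{a'/\gamma}.
\]

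The two $L^p$-norms sit on different sets, so the inequality must be closed by iteration. Parameterize a family of nested cylinders $Q_\theta = U_\theta \times [s_\theta, s_2]$, $\theta \in [0, 1]$, interpolating linearly between $Q_0 = U' \times [s, s_2]$ and $Q_1 = U \times [s_1, s_2]$, and re-run the preceding argument with cutoffs adapted to each pair $Q_{\theta_1} \subset Q_{\theta_2}$. This produces
\[
I(\theta_1) \leq \tfrac{1}{2} I(\theta_2) + C(\theta_2 - \theta_1)^{-2a'}(1+\lambda^{-1})^{a'}\Phi^{a'/\gamma}, \qquad 0 \leq \theta_1 < \theta_2 \leq 1,
\]
where $I(\theta) := \|\varrho\|_{L^p(Q_\theta)}^{2m}$. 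A standard iteration lemma (of Giaquinta--Giusti type) then absorbs the first term and yields $I(0) \leq C(1+\lambda^{-1})^{a'}\Phi^{a'/\gamma}$. Taking the $(2m)$-th root and using the identities $a'/(2m) = \gamma$ and $a'/(2m\gamma) = 1$---both direct consequences of $\gamma = (d+2)/(2p')$---gives the claim.

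The main obstacle is the exponent bookkeeping: the single choice $\gamma = (d+2)/(2p')$ must simultaneously enforce $(2m - 1/\gamma)\gamma' = p$ (for H\"older), $p/\gamma' < 2m$ (for Young), $a'/(2m) = \gamma$ (correct exponent on $1+\lambda^{-1}$), and $a'/(2m\gamma) = 1$ (so $\Phi$ appears to the first power). A subsidiary technical point is that the iteration lemma requires $I(\theta) < \infty$ a priori, which is furnished by the local $L^p$-regularity of $\varrho$ guaranteed by (H1)--(H2) and the results from \cite{BKR1}, \cite{BRSH3}.
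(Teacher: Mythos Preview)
Your argument is correct, and the exponent bookkeeping checks out: with $m=dp/(2(d+2))$ one has $\gamma=(d+2)(p-1)/(2p)>1$, $(2m-1/\gamma)\gamma'=p$, $2m-p/\gamma'=2p/((d+2)(p-1))>0$, and $a'/(2m)=\gamma$, exactly as you claim. The Giaquinta--Giusti iteration closes the domain mismatch, and the a~priori finiteness of $I(\theta)$ is indeed supplied by the regularity coming from (H1)--(H2).

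The paper, however, avoids the iteration entirely by a different device. Instead of a generic cutoff, it takes $\psi=\zeta(x)\eta(t)$ with the polynomial-decay property
\[
|\partial_t\eta|\le K\eta^{1-\delta},\qquad |\nabla\zeta|\le K\zeta^{1-\delta},
\]
for a carefully chosen $\delta=\delta(m,d)$. Every term of $F$ then carries a factor $\psi^{2-\delta'}$ with $\delta'\in\{0,\delta,2\delta\}$, and $\delta$ is tuned so that in the H\"older split of $\int F\varrho^{2m}$ (with exponents $\alpha,\alpha'$ satisfying $\alpha'=\gamma$) the second factor is \emph{exactly} $\bigl(\int|\varrho^m\psi|^{2(d+2)/d}\bigr)^{1/\alpha}$, i.e.\ the same weighted quantity as on the left-hand side of~(\ref{ineq3}). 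Young's inequality then absorbs it in a single step, with no iteration over nested cylinders. Thus the paper's H\"older split puts the $\psi$-weight into the absorbable factor, whereas you put all of $\psi$ into the coefficient factor and pay for it with the Giaquinta--Giusti lemma.

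What each approach buys: the paper's is a one-shot argument---cleaner once the special cutoff is built---while yours uses only off-the-shelf tools (ordinary cutoffs plus the standard absorption lemma) and is perhaps more robust if one later wants to vary the setting. Both lead to the same exponent $\gamma$ on $1+\lambda^{-1}$ and on the coefficients, and both rely on the same algebraic identity $\alpha'=\gamma=(d+2)/(2p')$.
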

\begin{proof}
Set $m=dp/2(d+2)$ and
$$
\alpha=1+\frac{4m}{(2m-1)d}, \quad \alpha'=1+\frac{(2m-1)d}{4m}, \quad \delta=\frac{4}{d(2m-1)+4m}.
$$
Note that $m\ge 1$.
Let us fix a function $\psi=\zeta(x)\eta(t)$, where $\zeta\in C^{\infty}_0(U)$,
$\zeta(x)=1$ if $x\in U'$, $0\le \psi\le 1$, $\eta\in C^{\infty}_0((s_1, T))$,
$\eta(t)=1$ if $t\in [s, s_2]$, $0\le \eta\le 1$ and
$$
|\partial_t\eta(t)|\le K\eta^{1-\delta}(t), \quad
|\nabla\zeta(x)|\le K\zeta^{1-\delta}(x)
$$
for some number $K>0$ and every $(x, t)\in U\times[s_1, s_2]$.
Note that $K$ depends only on $U$, $U'$, $s$ and $s_1$.
Applying Lemma \ref{lem2} we obtain
\begin{multline*}
\Bigl(\int_{s_1}^{s_2}\int_{U}|\varrho^m\psi|^{2(d+2)/d}\,dx\,dt\Bigr)^{d/(d+2)}\le
\\
\le32C(d)m^2(1+\lambda^{-1})\int_s^{s_2}\int_{U}\bigl[|\psi||\psi_t|+\|A\||\nabla\psi|^2+
|\sqrt{A^{-1}}B|^2\psi^2+c^{+}\psi^2\bigr]\varrho^{2m}\,dx\,dt.
\end{multline*}
Using H\"older's inequality with exponents
$\alpha$ and $\alpha'$, we estimate the integral in the right side of the last inequality
by the following expression:
$$
K^2\Bigl(\int_{s_1}^{s_2}\int_{U}\bigl(1+\|A\|+
|\sqrt{A^{-1}}B|^2+c^{+}\bigr)^{\alpha'}\varrho^{2m}\,dx\,dt\Bigr)^{1/\alpha'}
\Bigl(\int_{s_1}^{s_2}\int_{U}|\varrho^m\psi|^{2(d+2)/d}\,dx\,dt\Bigr)^{1/\alpha}.
$$
Applying the inequality $xy\le \varepsilon x^{\alpha}+C(\alpha, \varepsilon)y^{\alpha'}$
with sufficiently small $\varepsilon>0$, we obtain our assertion.
\end{proof}

\begin{theorem}\label{th3-2}{\rm ($L^{\infty}$-estimates)}
Let $\gamma>(d+2)/2$.
Let $U$ and $U'$ be balls in $\mathbb{R}^d$ with $\overline{U'}\subset U$.
Let also $[s_1, s_2]\subset (0, T)$.
Then, for every $s\in (s_1, s_2)$, there exists a number
$C>0$ depending on $U$, $U'$, $s$, $s_1$, $d$ and $\gamma$ such that
$$
\|\varrho\|_{L^{\infty}(U'\times[s, s_2])}\le C(1+\lambda^{-1})^{\gamma}
\int_{s_1}^{s_2}\int_{U}\bigl[1+\|A\|^{\gamma}+|c^{+}|^{\gamma}+|\sqrt{A^{-1}}B|^{2\gamma}\bigr]\varrho\,dx\,dt,
$$
where $\lambda=\lambda(U, [s_1, s_2])$, $\|A\|$ are defined as above.
\end{theorem}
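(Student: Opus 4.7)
The plan is to carry out a Moser iteration built on Lemma \ref{lem2}, using Theorem \ref{th3-1} as the seed $L^p$-rung. The point of the hypothesis $\gamma > (d+2)/2$ is that the conjugate exponent $\gamma' := \gamma/(\gamma-1)$ is then strictly smaller than $\chi := (d+2)/d$, so that $\beta := \chi/\gamma' > 1$ drives geometric growth of the iteration exponents---this is precisely the extra room that Theorem \ref{th3-1} does not have.

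First, I fix a nested family of parabolic cylinders $Q_k = U_k \times [\tau_k, s_2]$ with $Q_0$ a cylinder slightly inside $U \times [s_1, s_2]$ and $\bigcap_k Q_k = U' \times [s, s_2]$, together with smooth cutoffs $\psi_k$ supported in $Q_k$, equal to $1$ on $Q_{k+1}$, satisfying $|\nabla \psi_k| + |\partial_t \psi_k| \le C\, 2^k$. Write $G := 1 + \|A\| + c^+ + |\sqrt{A^{-1}}B|^2$. Lemma \ref{lem2} with $m = m_k$ and $\psi = \psi_k$ bounds $\|\varrho\|_{L^{2m_k\chi}(Q_{k+1})}^{2m_k}$ by $C m_k^2 (1+\lambda^{-1}) 4^k \int_{Q_k} G \varrho^{2m_k}\,dx\,dt$. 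I then apply H\"older in the factored form $G \varrho^{2m_k} = (G\varrho^{1/\gamma}) \cdot \varrho^{2m_k - 1/\gamma}$ with exponents $\gamma, \gamma'$, which brings in exactly $T := \int_{Q_0} G^\gamma \varrho\,dx\,dt$ (raised to the power $1/\gamma$) together with $\|\varrho\|_{L^{(2m_k - 1/\gamma)\gamma'}(Q_k)}^{2m_k - 1/\gamma}$. Imposing $(2m_k - 1/\gamma)\gamma' = 2m_{k-1}\chi$ so that the output of step $k-1$ equals the input of step $k$ yields the recurrence $m_{k+1} = \beta m_k + 1/(2\gamma)$, whose solutions grow like $\beta^k$.

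Setting $y_k := \|\varrho\|_{L^{r_k}(Q_k)}$, $a_k := 1 - 1/(2m_k\gamma) \in (0,1)$ and taking $(2m_k)$-th roots, the iteration becomes the log-linear recurrence $\ln y_{k+1} \le A_k + (1-a_k)\ln T + a_k \ln y_k$. The telescoping identity $\sum_{j \ge 0}(1-a_j)\prod_{l > j} a_l = 1 - \pi$, where $\pi := \prod_l a_l \in (0,1)$ (well-defined because $\sum(1 - a_l)$ is a convergent geometric series), then gives $y_\infty \le C^\ast\, T^{1-\pi} y_0^\pi$. The same telescoping applied to the $\ln(1+\lambda^{-1})$-component of $A_k$ (namely $\gamma(1-a_k)\ln(1+\lambda^{-1})$) produces the factor $(1+\lambda^{-1})^{\gamma(1-\pi)}$ inside $C^\ast$, while the remaining $k$-dependent terms in $A_k$ (involving $\ln m_k$ and $k\ln 4$) give an absolutely convergent contribution.

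Finally, I bound $y_0 = \|\varrho\|_{L^{r_0}(Q_0)}$ using Theorem \ref{th3-1}, choosing $m_0$ large enough (depending on $\gamma$ and $d$) so that $r_0 \ge 2(d+2)/d$. Theorem \ref{th3-1} gives $y_0 \le C(1+\lambda^{-1})^{\gamma_0} \int G^{\gamma_0}\varrho\,dx\,dt$ for some $\gamma_0 = (d+2)/(2r_0') < (d+2)/2 < \gamma$; since $G \ge 1$ pointwise, $G^{\gamma_0} \le G^\gamma$, hence $y_0 \le C(1+\lambda^{-1})^{\gamma_0} T$. Combining yields $\|\varrho\|_{L^\infty(Q_\infty)} \le C(1+\lambda^{-1})^{\gamma(1-\pi) + \gamma_0\pi} T \le C(1+\lambda^{-1})^\gamma T$, where the last step uses that the resulting exponent is at most $\gamma$ and $1 + \lambda^{-1} \ge 1$. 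The main obstacle I anticipate is the telescoping bookkeeping---checking that both the $T$-exponent and the $(1+\lambda^{-1})$-exponent simultaneously emerge in a form that can be absorbed into the clean statement of the theorem.
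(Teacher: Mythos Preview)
Your proposal is correct and follows essentially the same Moser iteration as the paper: nested cylinders with geometrically deteriorating cutoffs, Lemma~\ref{lem2} at each step, H\"older with exponents $\gamma,\gamma'$ splitting $G\varrho^{2m}$ so that the factor $\int G^\gamma\varrho$ appears, the recursion $p_{n+1}=\beta p_n+\text{const}$ with $\beta=(d+2)/(d\gamma')>1$, and Theorem~\ref{th3-1} as the seed. The only cosmetic difference is that the paper first normalizes so that $(1+\lambda^{-1})^\gamma\int G^\gamma\varrho=1$, which collapses your telescoping bookkeeping for the $T$- and $(1+\lambda^{-1})$-exponents into the trivial task of bounding $\|\varrho\|_{L^\infty}$ by a universal constant; your explicit tracking via $\sum(1-a_j)\prod_{l>j}a_l=1-\pi$ achieves the same end and is equally valid.
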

\begin{proof}
If $\varrho\equiv 0$ on $U\times[s_1, s_2]$, then the assertion is trivial.
Let us consider the case where $\varrho\not\equiv 0$.
Multiplying the solution $\varrho$ by the number
$$
(1+\lambda^{-1})^{-\gamma}\Bigl(
\int_{s_1}^{s_2}\int_{U}\bigl[1+\|A\|^{\gamma}+|c^{+}|^{\gamma}+|\sqrt{A^{-1}}B|^{2\gamma}\bigr]\varrho\,dx\,dt\Bigr)^{-1},
$$
we  can assume that
$$
(1+\lambda^{-1})^{\gamma}
\int_{s_1}^{s_2}\int_{U}\bigl[1+\|A\|^{\gamma}+|c^{+}|^{\gamma}+|\sqrt{A^{-1}}B|^{2\gamma}\bigr]\varrho\,dx\,dt=1.
$$
In this case in order to prove the theorem it is enough to find
a number $C$ depending only on $U$, $U'$, $s$, $s_1$, $s_2$, $d$ and $\gamma$
such that
$$
\|\varrho\|_{L^{\infty}(U'\times[s, s_2])}\le C.
$$
Let $U=U(x_0, R)$, $U'=U(x_0, R')$ and $R'<R$.
Set $R_n=R'+(R-R')2^{-n}$, $s_n=s-(s-s_1)2^{-n}$ and $U_n=U(x_0, R_n)$.
Let us consider the following system of increasing domains:
$$
Q_n=U_n\times[s_n, s_2], \quad Q_0=U\times[s_1, s_2].
$$
For each $n$ we fix a function $\psi_n\in C^{\infty}_0(\mathbb{R}^d\times(0, T))$
in the same way as in the proof of Theorem \ref{th3-1}, that is,
$\psi(x, t)=1$ if $(x, t)\in Q_{n+1}$, $0\le \psi\le 1$,
the support of $\psi$ is contained in $U_n\times (s_n, T)$
and $|\partial_t\psi_n(x, t)|+|\nabla\psi_n(x, t)|\le K^n$
for all $(x, t)\in \mathbb{R}^d$ and some number $K>1$
depending only on the numbers $s$, $s_1$, $R$, $R'$.

Applying Lemma \ref{lem2} and  H\"older's inequality
with exponents $\gamma$ and $\gamma'$, we obtain
$$
\Bigl(\int_{Q_n}|\varrho^m\psi_n|^{2(d+2)/d}\,dx\,dt\Bigr)^{d/(d+2)}\le
32m^2C(d, s)K^{2n}\Bigl(\int_{Q_n}\varrho^{(2m-1)\gamma'+1}\,dx\,dt\Bigr)^{1/\gamma'}.
$$
Set
$$
p_{n+1}=\beta p_n+(\gamma'-1)\gamma'^{-1}, \quad p_1=\gamma'+1, \quad
\beta=(d+2)d^{-1}\gamma'^{-1}.
$$
Note that $\beta^{n-1}p_1\le p_n\le \beta^{n-1}(p_1+1)$.
Taking $m=p_{n+1}d/(2d+4)$, we obtain
$$
\|\varrho\|_{L^{p_{n+1}}(Q_{n+1})}\le C^{n\beta^{-n}}\|\varrho\|_{L^{p_n}(Q_n)}^{p_n/(p_n+\gamma'-1)},
$$
where the number $C$ depends only on $K$, $d$, and $\gamma$.
Finally, note that $\sum_n n\beta^{-n}<\infty$ and according to Theorem \ref{th3-1}
the norm $\|\varrho\|_{L^{p_1}(Q_1)}$ is estimated by a number
depending only on the numbers~$p_1$, $d$, $s$, $s_1$, $U$, and $U_1$.
\end{proof}

\begin{remark}\rm
 (i) Note that the constant $C$ in Theorem \ref{th3-1} and Theorem \ref{th3-1} does not depend on~$s_2$.

 (ii)  If $c\le 0$, then all the  inequalities above will be true without
the coefficient $c$ in the right-hand side.
\end{remark}

\begin{corollary}
Let $\gamma>(d+2)/2$, $\kappa>0$ and $t_0\in (0, T)$. Then there exists a number $C>0$ depending only on
$\kappa$, $t_0$, $d$ and $\gamma$ such that for all $(x, t)\in\mathbb{R}^d\times(t_0, T)$
$$
\varrho(x, t)\le
C(1+\lambda^{-1}(x, t))^{\gamma}\int_{t_0/2}^t\int_{U(x, \kappa)}
(1+\|A\|^{\gamma}+|c^{+}|^{\gamma}+|\sqrt{A^{-1}}B|^{2\gamma})\varrho\,dy\,d\tau,
$$
where
$$
\lambda(x, t)=\inf\bigl\{(A(y, \tau)\xi, \xi)\colon
 \, |\xi|=1, \quad (y, \tau)\in U(x, \kappa)\times[t_0/2, t]\bigr\}.
$$
In particular, if $\mu_t(dx)=\varrho(x, t)\,dx$ is a subprobability measure for almost all $t\in(0, T)$,
the functions $\|A\|^{\gamma}$, $|c^{+}|^{\gamma}$, $|B|^{2\gamma}$
are in $L^1(\mathbb{R}^d\times(t_0/2, T), \mu)$ and
the function $\|A\|^{-1}$ is uniformly bounded, then $\varrho\in L^{\infty}(\mathbb{R}^d\times(t_0, T))$.
\end{corollary}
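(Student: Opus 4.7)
The plan is to derive the pointwise estimate by applying Theorem \ref{th3-2} at each point $(x,t)$ with a suitably translated choice of balls and time interval. Fix $(x,t)\in\mathbb{R}^d\times(t_0,T)$ and set $U:=U(x,\kappa)$, $U':=U(x,\kappa/2)$, $s_1:=t_0/2$, $s:=t_0$ and $s_2:=t$. Then $(x,t)\in U'\times[s,s_2]$ since $t>t_0=s$, and $\lambda(U,[s_1,s_2])$ coincides with $\lambda(x,t)$ as defined in the statement. Applying Theorem \ref{th3-2} yields
$$
\varrho(x,t)\le \|\varrho\|_{L^\infty(U'\times[s,s_2])}\le C_0(1+\lambda^{-1}(x,t))^\gamma\int_{t_0/2}^t\int_{U(x,\kappa)}\bigl[1+\|A\|^\gamma+|c^{+}|^\gamma+|\sqrt{A^{-1}}B|^{2\gamma}\bigr]\varrho\,dy\,d\tau,
$$
which is the required pointwise inequality.

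The main obstacle is verifying that $C_0$ is independent of the base point $x$: the statement of Theorem \ref{th3-2} says $C_0$ depends on $U,U',s,s_1,d,\gamma$, and here the balls vary with $x$. Tracing through the proofs of Lemma \ref{lem2} and Theorem \ref{th3-2}, the constants enter only via cutoff functions $\psi=\zeta(x)\eta(t)$ and the bound $K$ on $|\nabla\zeta|/\zeta^{1-\delta}$ and $|\eta_t|/\eta^{1-\delta}$, which depend only on the two radii $\kappa$ and $\kappa/2$ and on the gap $s-s_1=t_0/2$. By translation invariance one may take $\zeta$ to be a fixed profile recentered at $x$, so $K$ is the same for every center. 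Together with Remark (i), which guarantees independence from $s_2=t$, this shows that $C_0$ depends only on $\kappa,t_0,d,\gamma$, as required.

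For the final $L^\infty$ assertion, the hypothesis that $\|A\|^{-1}$ is uniformly bounded by some $M$ implies $\lambda^{-1}(x,t)\le M$ uniformly in $(x,t)$, so the prefactor $(1+\lambda^{-1})^\gamma$ is controlled. Since $\mu_t$ is a subprobability and $\|A\|^\gamma,|c^{+}|^\gamma,|B|^{2\gamma}\in L^1(\mathbb{R}^d\times(t_0/2,T),\mu)$, the contributions of $1$, $\|A\|^\gamma\varrho$ and $|c^{+}|^\gamma\varrho$ to the integral over $U(x,\kappa)\times(t_0/2,t)$ are bounded uniformly in $x$. The remaining term is handled by the pointwise bound $|\sqrt{A^{-1}}B|^2\le \|A\|^{-1}|B|^2\le M|B|^2$, reducing it to a constant multiple of $\int|B|^{2\gamma}\,d\mu<\infty$. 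Therefore $\varrho\in L^\infty(\mathbb{R}^d\times(t_0,T))$.
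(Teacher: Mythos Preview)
Your proof is correct and follows the same approach as the paper: apply Theorem \ref{th3-2} with $U=U(x,\kappa)$, $U'=U(x,\kappa/2)$, $s_1=t_0/2$, $s=t_0$, $s_2=t$, and use translation invariance (the paper phrases this as ``shift the point $x$ to $0$'') to see that the constant is uniform in the center $x$. Your additional justification of the uniformity via the cutoff profiles and Remark (i), and your explicit treatment of the final $L^\infty$ assertion, simply make explicit what the paper leaves to the reader.
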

\begin{proof}
Let us shift the point $x$ to $0$ and apply Theorem \ref{th3-2}
with the balls $U=U(x, \kappa)$ and $U'=U(x, \kappa/2)$ and points
$s_1=t_0/2$, $s=t_0$, $s_2=t$.
\end{proof}

\begin{corollary}
Let $\gamma>(d+2)/2$ and $\Theta\in (0, 1)$. Then there exists a number $C>0$ depending only on
$\gamma$, $d$ and $\Theta$ such that for all $(x, t)\in \mathbb{R}^d\times(0, T)$
$$
\varrho(x, t)\le
C(1+\lambda^{-1}(x, t))^{\gamma}t^{-(d+2)/2}\int_{\Theta t}^{t}\int_{U(x, \sqrt{t})}
(1+\|A\|^{\gamma}+t^{2\gamma}|c^{+}|^{\gamma}+t^{2\gamma}|\sqrt{A^{-1}}B|^{2\gamma})\varrho\,dy\,d\tau,
$$
where
$$
\lambda(x, t)=\inf\bigl\{(A(y, \tau)\xi, \xi)\colon \, |\xi|=1, \quad
(y, \tau)\in U(x, \sqrt{t})\times[\Theta t, t]\bigr\}.
$$
In particular, if $\mu_t(dx)=\varrho(x, t)\,dx$ is a subprobability measure for almost all $t\in(0, T)$,
the functions $\|A\|^{\gamma}$, $|c^{+}|^{\gamma}$, $|B|^{2\gamma}$
are in $L^1(\mathbb{R}^d\times(0, T), \mu)$ and
the function $\|A\|^{-1}$ is uniformly bounded, then there exists a number $\widetilde{C}>0$ such that
$$
\varrho(x, t)\le \widetilde{C}t^{-d/2}
\quad
\hbox{for all $(x, t)\in\mathbb{R}^d\times(0, T)$.}
$$
\end{corollary}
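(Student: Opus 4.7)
The plan is to deduce the pointwise bound from the fixed-scale $L^\infty$-estimate of Theorem \ref{th3-2} by a parabolic rescaling centered at the point $(x,t)$ where we wish to estimate $\varrho$. Concretely, I would introduce the rescaled density $\tilde\varrho(z,s) := \varrho(x + \sqrt{t}\,z, ts)$ on $\mathbb{R}^d \times (0,1)$ and verify, by a direct chain-rule computation starting from the divergence-form identity \eqref{id3}, that $\tilde\varrho$ is the density of a solution of an equation of the form \eqref{e1} with rescaled coefficients $\tilde a^{ij}(z,s) = a^{ij}(x+\sqrt{t}\,z, ts)$, $\tilde b^i(z,s) = \sqrt{t}\,b^i(x+\sqrt{t}\,z, ts)$, $\tilde c(z,s) = t\,c(x+\sqrt{t}\,z, ts)$. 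Consequently the modified drift $\tilde B^i = \tilde b^i - \partial_{z_j}\tilde a^{ij}$ equals $\sqrt{t}\,B^i$ at the pre-image point, the ellipticity data satisfy $\|\tilde A\|(z,s) = \|A\|(x+\sqrt{t}\,z, ts)$ and $\tilde\lambda(U(0,1),[\Theta,1]) = \lambda(x,t)$, and the quantities $|\sqrt{\tilde A^{-1}}\tilde B|^2$ and $\tilde c^+$ pick up explicit positive powers of $t$ that will account for the $t$-factors in the integrand of the final bound.

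Next I would apply Theorem \ref{th3-2} to $\tilde\varrho$ on the fixed configuration $U = U(0,1)$, $U' = U(0,1/2)$, $s_1 = \Theta$, $s_2 = 1$, with any intermediate $s \in (\Theta, 1)$; the resulting constant depends only on $\gamma$, $d$, and $\Theta$, as required. Since $(0,1) \in U' \times [s, s_2]$, the $L^\infty$-norm on the left-hand side dominates $\tilde\varrho(0,1) = \varrho(x,t)$. Undoing the change of variables $y = x + \sqrt{t}\,z$, $\tau = ts$ in the integral on the right, whose Jacobian is $dy\,d\tau = t^{(d+2)/2}\,dz\,ds$, produces the prefactor $t^{-(d+2)/2}$, transforms the integration region into $U(x,\sqrt{t}) \times [\Theta t, t]$, and replaces the rescaled coefficients by $\|A\|$ together with appropriate positive powers of $t$ multiplying $|c^+|^\gamma$ and $|\sqrt{A^{-1}}B|^{2\gamma}$. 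This yields the pointwise inequality in the form stated.

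For the ``in particular'' assertion, uniform boundedness of $\|A\|^{-1}$ makes $(1+\lambda^{-1}(x,t))^\gamma$ uniformly bounded in $(x,t)$. The constant term in the integrand gives $\int_{\Theta t}^t \mu_\tau(U(x,\sqrt{t}))\,d\tau \le (1-\Theta)\,t$ by the subprobability hypothesis, which combined with the prefactor $t^{-(d+2)/2}$ yields the claimed $O(t^{-d/2})$. The remaining three terms, which involve $\|A\|^\gamma$, $|c^+|^\gamma$, and $|\sqrt{A^{-1}}B|^{2\gamma}$ integrated against $\varrho$, are controlled by their global $L^1(\mathbb{R}^d\times(0,T),\mu)$-norms; the additional $t^{2\gamma}$-factors, together with the condition $\gamma > (d+2)/2$, make these contributions subdominant to $t^{-d/2}$ as $t\to 0$, and harmless for $t$ bounded away from $0$. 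The main obstacle is the scaling bookkeeping in the first step -- correctly identifying the equation satisfied by $\tilde\varrho$ and tracking the exact powers of $t$ produced in each rescaled coefficient -- after which the remainder of the proof is a routine application of Theorem \ref{th3-2} followed by a change of variables.
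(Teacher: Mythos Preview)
Your approach is exactly the paper's: perform the parabolic rescaling $y\mapsto (y-x)/\sqrt t$, $\tau\mapsto\tau/t$, and apply Theorem~\ref{th3-2} on the fixed configuration $U=U(0,1)$, $U'=U(0,1/2)$, $s_1=\Theta$, $s_2=1$ (the paper takes $s=(1+\Theta)/2$, but any fixed intermediate value works). Your identification of the rescaled coefficients $\tilde A=A$, $\tilde B=\sqrt t\,B$, $\tilde c=t\,c$ and of the Jacobian $t^{(d+2)/2}$ simply makes explicit what the paper compresses into one sentence.

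One small wrinkle in your treatment of the ``in particular'' clause: the $\|A\|^\gamma$ term in the integrand carries no compensating power of $t$, unlike the $c^+$ and $B$ terms, so bounding $\int_{\Theta t}^{t}\int_{U(x,\sqrt t)}\|A\|^\gamma\varrho\,dy\,d\tau$ by the global $L^1(\mu)$-norm gives only $O(t^{-(d+2)/2})$, not $O(t^{-d/2})$, as $t\to 0$. Your sentence appears to group this term together with the two that do pick up a positive $t$-power. The paper's own proof does not spell out this step either, so you are not diverging from it, but be aware that this term needs a separate argument (or an additional hypothesis on $\|A\|$) to close the $t^{-d/2}$ bound uniformly near $t=0$.
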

\begin{proof}
In order to prove the estimate at a point $(x_0, t_0)$
it suffices to change variables
$x \mapsto (x-x_0)/\sqrt{t_0}$ and $t\mapsto t/t_0$ and
apply Theorem \ref{th3-2} with the balls
$U=U(0, 1)$ and $U'=U(0, 1/2)$ and points $s_1=\Theta$, $s=(1+\Theta)/2$, $s_2=1$.
\end{proof}

\begin{corollary}\label{cor3}
Let $\Phi\in C^{2, 1}(\mathbb{R}^d\times(0, T))$ and $\Phi>0$.
Set
$$
\widetilde{c}=c+
\bigl(\partial_t\Phi+{\rm div}(A\nabla\Phi)+B\nabla\Phi\bigr)\Phi^{-1},
\quad
\widetilde{B}=B+\Phi^{-1}A\nabla\Phi.
$$
Let $\gamma>(d+2)/2$ and $\Theta\in(0, 1)$.
Then there exists a number $C>0$ depending only on
$\gamma$, $d$ and $\Theta$ such that for all $(x, t)\in \mathbb{R}^d\times(0, T)$
\begin{multline*}
\varrho(x, t)\le C\Phi(x, t)^{-1}(1+\lambda^{-1}(x, t))^{\gamma}\times
\\
\times
t^{-(d+2)/2}\int_{\Theta t}^{t}\int_{U(x, \sqrt{t})}
(1+\|A\|^{\gamma}+t^{2\gamma}|\widetilde{c}^{+}|^{\gamma}
+t^{2\gamma}|\sqrt{A^{-1}}\widetilde{B}|^{2\gamma})\Phi\varrho\,dy\,d\tau,
\end{multline*}
where $\lambda$ is defined in the previous corollary.
In particular, if
$$
\sup_{t\in (0, T)}\int_{\mathbb{R}^d}\Phi(x, t)\varrho(x, t)\,dx<\infty,
$$
the functions $\|A\|^{\gamma}\Phi$, $|\widetilde{c}^{+}|^{\gamma}\Phi$,
$|\widetilde{B}|^{2\gamma}\Phi$
are in $L^1(\mathbb{R}^d\times(0, T), \mu)$ and
the function $\|A\|^{-1}$ is uniformly bounded, then there exists a number $\widetilde{C}>0$ such that
$$
\varrho(x, t)\le \widetilde{C}t^{-d/2}\Phi(x, t)^{-1}
\quad \hbox{for all $(x, t)\in\mathbb{R}^d\times(0, T)$.}
$$
\end{corollary}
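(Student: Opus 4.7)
The plan is to apply the preceding corollary to the transformed density $\widetilde{\varrho}(x,t):=\Phi(x,t)\varrho(x,t)$, whose associated measure satisfies a Fokker--Planck--Kolmogorov equation with the same diffusion matrix $A$ but with drift and potential replaced by $\widetilde{B}$ and $\widetilde{c}$. The cleanest derivation of the transformed equation is to substitute the test function $\varphi=\Phi\psi$ (with $\psi\in C^{\infty}_0(\mathbb{R}^d\times(0,T))$) into the weak form of $\partial_t\mu=L^*\mu$: using the product rule for $\partial_i\varphi$ and $\partial_i\partial_j\varphi$, the symmetry $a^{ij}=a^{ji}$, and the identity $a^{ij}\partial_i\partial_j\Phi+b^i\partial_i\Phi={\rm div}(A\nabla\Phi)+B\cdot\nabla\Phi$ (which rests on $B^i=b^i-\partial_j a^{ij}$), all terms regroup into
$$
\int_0^T\!\!\int_{\mathbb{R}^d}\widetilde{\varrho}\bigl[\partial_t\psi+a^{ij}\partial_i\partial_j\psi+\tilde b^i\partial_i\psi+\widetilde{c}\,\psi\bigr]\,dx\,dt=0,
$$
with $\widetilde{c}$ as in the statement and with the corresponding divergence-form drift equal to $\widetilde{B}$. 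Thus $\widetilde{\varrho}$ satisfies the divergence-form FPK equation $\partial_t\widetilde{\varrho}={\rm div}(A\nabla\widetilde{\varrho}-\widetilde{B}\widetilde{\varrho})+\widetilde{c}\widetilde{\varrho}$ in the sense of identity~(\ref{id3}).

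Since $\Phi\in C^{2,1}$ and $\Phi>0$, the transformed coefficients preserve (H1) and (H2) on every compact set, and the ellipticity constant $\lambda(x,t)$ is unchanged because $A$ is not touched. Applying the previous corollary to $\widetilde{\varrho}$ with the same $\gamma$ and $\Theta$ yields
$$
\widetilde{\varrho}(x,t)\le C(1+\lambda^{-1}(x,t))^{\gamma}t^{-(d+2)/2}\int_{\Theta t}^{t}\!\!\int_{U(x,\sqrt{t})}\!\bigl(1+\|A\|^{\gamma}+t^{2\gamma}|\widetilde{c}^{+}|^{\gamma}+t^{2\gamma}|\sqrt{A^{-1}}\widetilde{B}|^{2\gamma}\bigr)\widetilde{\varrho}\,dy\,d\tau,
$$
and dividing by $\Phi(x,t)$ while using $\widetilde{\varrho}=\Phi\varrho$ inside the integral on the right produces exactly the pointwise bound claimed for $\varrho$.

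For the ``in particular'' assertion, the total mass $\widetilde{\mu}_t(\mathbb{R}^d)=\int\Phi\varrho\,dx$ is uniformly bounded in $t$ by hypothesis, so after normalization $\widetilde{\mu}_t$ is a subprobability measure; the stated integrabilities of $\|A\|^{\gamma}\Phi$, $|\widetilde{c}^{+}|^{\gamma}\Phi$ and $|\widetilde{B}|^{2\gamma}\Phi$ with respect to $\mu$ become exactly the integrabilities of $\|A\|^{\gamma}$, $|\widetilde{c}^{+}|^{\gamma}$, $|\widetilde{B}|^{2\gamma}$ with respect to $\widetilde{\mu}$, while the uniform bound on $\|A\|^{-1}$ is unaffected. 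The corresponding conclusion of the previous corollary then gives $\widetilde{\varrho}(x,t)\le\widetilde{C}t^{-d/2}$, i.e., $\varrho(x,t)\le\widetilde{C}t^{-d/2}\Phi(x,t)^{-1}$. The only delicate step is the algebraic bookkeeping in deriving the transformed equation; everything else is a direct reduction to the preceding corollary via the weight $\Phi$.
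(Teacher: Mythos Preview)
Your proof is correct and follows exactly the paper's approach: the paper's entire proof is the single observation that $\Phi\varrho$ satisfies equation~(\ref{e3.1}) with the new coefficients $\widetilde{c}$ and $\widetilde{B}$, after which the previous corollary applies. You have simply supplied the details of that observation (via the test-function substitution $\varphi=\Phi\psi$) and spelled out the ``in particular'' reduction, which the paper leaves implicit.
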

\begin{proof}
It suffices to observe that the function $\Phi\varrho$ satisfies equation (\ref{e3.1}) with
the new coefficients $\widetilde{c}$ and $\widetilde{B}$.
\end{proof}

Let us now consider two typical examples.
We shall assume that $c\le 0$ and that $\mu_t(dx)=\varrho(x, t)\,dx$ is a subprobability
solution  of the Cauchy problem for equation~(\ref{e1})
with the initial condition $\nu$ such that $|c|\in L^1(\mu)$ and
$$
\mu_t(\mathbb{R}^d)\le \nu(\mathbb{R}^d)+\int_0^t\int_{\mathbb{R}^d}c(x, s)\,\mu_s(dx)\,ds.
$$
We obtain upper estimates of $\varrho$ in several different situations.

\begin{example}\label{ex55}\rm
Let $\alpha>0$, $r>2$ and $k>r$. Assume that $c\le 0$ and
\begin{multline*}
\alpha r|x|^{r-2}\hbox{\rm trace}\, A(x, t)+
\\
+\alpha r(r-2)|x|^{r-4}(A(x, t)x, x)+\alpha^2r^2|x|^{2r-4}(A(x, t)x, x)+
\\
+\alpha r|x|^{r-2}(b(x, t), x)+c(x, t)\le C-C|x|^k
\end{multline*}
for some $C>0$ and all $(x, t)\in\mathbb{R}^d\times(0, T)$. Suppose also that
for all $(x, t)\in\mathbb{R}^d\times(0, T)$ we have
$$
C_1\exp(-\kappa_1|x|^{r-\delta})\le \|A(x, t)\|\le C_2\exp(\kappa_2|x|^{r-\delta}),
$$
and
$$
|b^i(x, t)|+|\partial_{x_j}a^{i j}(x, t)|\le C_3\exp(\kappa_3|x|^{r-\delta})
$$
with some positive numbers $C_1$, $C_2$, $C_3$, $\kappa_1$, $\kappa_2$, $\kappa_3$ and $\delta\in (0, r)$.
Let $\alpha'\in (0, \alpha)$.
Then the density $\varrho$ satisfies the  inequality
$$
\varrho(x, t)\le C_4\exp(-\alpha'|x|^r)\exp(C_5 t^{-\frac{r}{k-r}})
$$
for all $(x, t)\in\mathbb{R}^d\times(0, T)$ and some positive numbers $C_4$ and $C_5$.
\end{example}
\begin{proof}
According to Example \ref{3exm03} we have
$$
\int_{\mathbb{R}^d}\exp\bigl(\alpha|x|^r\bigr)d\mu_t\le
\gamma_1\exp\bigl(\gamma_2t^{-\frac{r}{k-r}}\bigr)
$$
for almost every $t\in(0, T)$ and some numbers $\gamma_1$ and $\gamma_2$.
Set $\Phi(x)=\exp(\alpha'|x|^r)$.
Note that $\widetilde{c}^{+}\le \gamma_3$ and
$$
(1+\|A\|^{\gamma}
+t^{2\gamma}|\sqrt{A^{-1}}\widetilde{B}|^{2\gamma})\Phi\le \gamma_4\exp(\alpha|x|^r)
$$
for all $(x, t)\in\mathbb{R}^d\times(0, T)$ and some number $\gamma_3$.
Now the desired estimates follow from Corollary \ref{cor3}.
\end{proof}

\begin{example}\rm
Let $r>2$, $k>2$, $\gamma>d+2$, $\alpha>0$ and $\beta>r/(k-2)$. Assume that
\begin{multline*}
\alpha r\hbox{\rm trace}\, A(x, t)
+\alpha r(r-2)|x|^{-2}(A(x, t)x, x)+
\\
+\alpha r(b(x, t), x)+|x|^2c(x, t)+\alpha^2r^2|x|^{r-2}(A(x, t)x, x)\le C-C|x|^k,
\end{multline*}
where $C>0$. Suppose also that
for all $(x, t)\in\mathbb{R}^d\times(0, T)$ we have
$$
C_1(1+|x|^{\frac{m}{\gamma}})^{-1}\le \|A(x, t)\|\le C_2(1+|x|^{\frac{m}{\gamma}})
$$
and
$$
|b^i(x, t)|^{2\gamma}+|\partial_{x_j}a^{ij}(x, t)|^{2\gamma}\le C_3(1+|x|^m)
$$
with some positive numbers $C_1$, $C_2$, $C_3$ and $m\ge \gamma\max\{r-1, r\beta^{-1}\}$.
Let $\alpha'\in (0, \alpha)$. Then the density $\varrho$ satisfies the  inequality
$$
\varrho(x, t)\le C_4t^{-\frac{8m\beta+rd-4\gamma r}{2r}}\exp(-\alpha' t^{\beta}|x|^r)
$$
for all $(x, t)\in\mathbb{R}^d\times(0, T)$ and some positive numbers $C_4$ and $C_5$.
\end{example}
\begin{proof}
According to Example \ref{3exm04} we have
$$
\int_{\mathbb{R}^d}\exp(\alpha t^{\beta}|x|^r)\,d\mu_t\le \gamma_1
$$
for all $t\in(0, T)$ and some number $\gamma_1$.
Note that for every $p\ge 1$ and $\varepsilon>0$ one has
$$
|x|^p\le \gamma_2t^{-\frac{\beta p}{r}}\exp(\varepsilon t^{\beta}|x|^r),
$$
so we can apply Corollary \ref{cor3} with $\Phi(x, t)=\exp(\alpha t^{\beta}|x|^r)$.
\end{proof}

{\bf Acknowledgements}

The author expresses his deep gratitude to V.I. Bogachev for fruitful discussions
and remarks. The author thanks the organizers for a very nice conference in Gaeta.

This work was supported by the RFBR projects
11-01-00348-a, 12-01-33009,
11-01-12018-ofi-m-2011,
and the program SFB 701 at the University of Bielefeld.

\end{document}